\documentclass[journal]{IEEEtran}

\usepackage{amsmath,amssymb,amsfonts}
\usepackage{algorithmic}
\usepackage{graphicx}
\usepackage{algorithm,algorithmic}
\usepackage{hyperref}
\usepackage{amsthm}
\usepackage{xcolor}

\usepackage{booktabs} 
\usepackage{multirow}
\usepackage{threeparttable}
\newtheorem{theorem}{Theorem}
\newtheorem{lemma}{Lemma}

\newtheorem{assumption}{Assumption}
\newtheorem{remark}{Remark}

\hyphenation{op-tical net-works semi-conduc-tor}

\begin{document}

\title{Guaranteeing Both Consensus and Optimality in Decentralized Nonconvex Optimization with Multiple Local Updates}


\author{Jie Liu, Zuang Wang, and Yongqiang Wang, ~\IEEEmembership{Senior Member,~IEEE} \thanks{The work was supported in part by the National Science Foundation under Grants CCF-2106293, CCF-2215088, CNS-2219487, CCF-2334449, and CNS-2422312 (Corresponding author: Yongqiang Wang).} \thanks{The authors are with the Department of Electrical and Computer Engineering, Clemson University, Clemson, SC 29634, USA (e-mail: jie9@clemson.edu; zuangw@clemson.edu; yongqiw@clemson.edu).}    }


\maketitle

\begin{abstract}
Scalable decentralized optimization in modern large-scale systems hinges on efficient communication. A widely adopted approach to reducing communication overhead in distributed optimization is performing multiple local updates between two consecutive communication rounds, as is commonly implemented in server-assisted distributed learning frameworks such as federated learning. However, extending this approach to fully decentralized settings presents fundamental challenges. In fact, all existing decentralized algorithms that incorporate multiple local updates can guarantee accurate convergence only under strong convexity assumptions, limiting their applicability to nonconvex optimization problems commonly found in machine learning. Moreover, many methods require exchanging and storing auxiliary variables—like gradient tracking vectors or correction terms—to ensure convergence under data heterogeneity, incurring significant communication and memory costs. In this paper, we propose \textsc{MILE}, a fully decentralized algorithm that guarantees both optimality and consensus under multiple local updates in general nonconvex settings. This is enabled by our novel periodic-system-based formulation and a lifting-based analysis, which together allow us to derive a closed-form expression for the state evolution over multiple local updates—a theoretical result not achieved by existing methods. This closed-form expression allows us, for the first time, to establish guaranteed consensus and optimality in decentralized nonconvex optimization under multiple local updates—in sharp contrast to existing results, which only ensure optimality of the average state in the nonconvex setting. We prove that \textsc{MILE} attains an \( O(1/T) \) convergence rate under both exact and stochastic gradient settings.  In addition, MILE requires each interacting pair of agents to exchange only a single variable, which minimizes communication and memory overhead compared with counterpart algorithms. Numerical experiments on benchmark datasets confirm the effectiveness of the proposed algorithm.

\end{abstract}

\begin{IEEEkeywords}
Decentralized Nonconvex Optimization, Multiple Local Updates
\end{IEEEkeywords}

\IEEEpeerreviewmaketitle

\section{Introduction}
Over the past decade, decentralized optimization has been extensively studied and successfully applied in a variety of domains, including power systems \cite{power_system1,power_system2,power_system3}, neural network training~\cite{neural_network1_distributed_optimization,neural_network2_distributed_optimization,pmlr-v54-mcmahan17a}, smart grids~\cite{smart_grid1_distributed_optimization,smart_grid2_distributed_optimization,smart_grid3_distributed_optimization}, and wireless networks~\cite{wireless_network1_distributed_optimization,wireless_network2_distributed_optimization,wireless_network3_distributed_optimization}. Unlike centralized approaches that rely on collecting all data at a central server for processing, decentralized optimization methods enable a network of agents to collaboratively solve a global optimization problem using only local computations and communication with neighboring agents. This architecture offers several notable benefits, including enhanced robustness to failures, improved data privacy, and superior scalability \cite{yao_yang_survey}. These advantages have driven a growing interest in decentralized optimization across various fields in recent years \cite{Leishi3,youcheng_niu1,xiuxianli1,zhanyudistributed,demingyuan1,yuan2024distributed}.

In decentralized optimization, efficient information exchange among agents is critical—particularly in high-dimensional applications such as deep learning, where optimization variables can scale to billions of parameters \cite{highdimension1,highdimension2,highdimension3}. A major limitation of many existing decentralized algorithms is their reliance on message exchanges between neighboring agents at every iteration. In high-dimensional scenarios, such as those encountered in deep learning, this communication demand can become a significant bottleneck. To address this challenge, federated learning commonly employs a different strategy: clients perform multiple local updates on their local data before periodically communicating with a central server to synchronize model parameters \cite{pmlr-v119-karimireddy20a,pmlr-v119-malinovskiy20a,pmlr-v130-charles21a,NEURIPS2020_4ebd440d,A_Mitra_Linear,pmlr-v202-huang23p,NEURIPS2022_28553688}. While this strategy has proven effective in federated learning, extending it to fully decentralized optimization presents substantial challenges: decentralized optimization applications lack a central server to correct the drift in local optimization variables caused by independent local updates of individual agents. This drift leads local variables to converge toward the optima of individual local objective functions, rather than a global optimum, thereby impeding overall convergence. Consequently, the limited existing methods that attempt to address this issue face several fundamental limitations.

\begin{itemize}
    \item \textbf{Lack of Consensus Guarantee:} Recent studies \cite{LED,taolin1,LOCAL1_AVERAGING} have explored decentralized optimization schemes that incorporate multiple local updates. However, these methods typically only guarantee the convergence of the \emph{average} of the local variables to a global optimum. This is inadequate in fully decentralized settings—where agents lack access to the global average and must rely solely on their own local variables.  Without ensuring that each agent’s variable converges to a common global optimum, the final outcomes may be inconsistent across the network, compromising both the reliability and practical deployability of the algorithm. It is worth noting that recent works have established consensus guarantee for \emph{strong convexity} objective functions \cite{10886043,LOCAL_SMALL_STEPSIZE,WeiShi3,Nedich_time_varying2,Mind_submitted}. However, extending such consensus guarantees to more complex nonconvex problems remains a fundamental open challenge.

    \item \textbf{High Communication and Memory Overhead:} Most existing decentralized algorithms that can ensure optimality under multiple local updates \cite{taolin1,LOCAL1_AVERAGING,LOCAL_SMALL_STEPSIZE,WeiShi3,Nedich_time_varying2,zhou2025decentralized,kunyuan1} require agents to exchange and store auxiliary variables—such as gradient-tracking variables or correction terms—that are typically of the same dimension as the optimization variable. This leads to substantial communication overhead and imposes significant memory demands on each agent. Such costs pose serious challenges in high-dimensional applications, particularly in decentralized machine learning applications where the dimension of optimization variables can scale to billions.

\end{itemize}

In this paper, we propose an algorithm that guarantees both consensus and optimality by that all agents’ states converge to a common stationary point, while ensuring communication and memory efficiency by eliminating the need for auxiliary variables. The main \textbf{contributions} are summarized as follows:

\begin{itemize}
\item \textbf{New Algorithm:} We propose {MILE}, a novel decentralized optimization algorithm that supports multiple local updates and guarantees both consensus and optimality in nonconvex settings—\emph{without relying on auxiliary variables}. Inspired by the well-known decentralized algorithm EXTRA~\cite{WeiShi1}, {MILE} employs an update rule that incorporates gradient information from both the current and previous iterations, enabling each agent to independently correct the drift introduced by multiple local updates. It is worth noting that unlike EXTRA \cite{WeiShi1} and its variants ED~\cite{ED_Yuan}, NIDS~\cite{Ming_Yan1}, and $D^2$~\cite{Ming_Yan2} that only consider one local update between two consecutive communications and often diverge when incorporated with multiple local updates, {MILE} is specifically designed to compensate for the drifts caused by multiple local updates. A key novelty is a carefully chosen weight parameter, which plays an essential role in guaranteeing both consensus and optimality in decentralized nonconvex optimization with multiple local updates. Numerical experiments on benchmark datasets confirm the effectiveness of the proposed algorithm.

\item \textbf{Consensus and Optimality Guarantee:} Unlike existing nonconvex methods that only guarantee convergence of the average of local variables to a stationary point under multiple local updates \cite{LED,taolin1,LOCAL1_AVERAGING}, MILE ensures that each agent’s local variable converges to a common stationary point with an \( O(1/T) \) convergence rate in general nonconvex settings. To the best of our knowledge, MILE is the first algorithm to achieve both optimality and consensus in decentralized nonconvex optimization under multiple local updates. To support this analysis, we develop new analytical tools based on lifting techniques, which enable us to rigorously prove convergence of all agents' states to a common stationary point in decentralized nonconvex optimization with multiple local updates. Specifically, we propose a novel analytic framework by recasting the decentralized optimization algorithms with multiple local updates into an equivalent periodic dynamical system. We then employ the lifting technique to derive a \emph{closed-form analytical expression} for the optimization variable (see Lemma~\ref{lemma_recursive}, a core contribution of this paper). The combination of periodic system modeling and lifting-based analysis offers a novel framework that, to the best of our knowledge, has not been reported before, and enables, for the \emph{first time}, the establishment of both consensus and optimality in decentralized nonconvex optimization with multiple local updates. Moreover, our theoretical guarantees extend to the stochastic gradient setting where each agent's gradients are subject to noises.



\item \textbf{Efficient Communication:} In contrast to existing methods \cite{taolin1,LOCAL1_AVERAGING,LOCAL_SMALL_STEPSIZE,kunyuan1}, which require each pair of interacting agents to exchange two 
$n$-dimensional vectors per communication round—typically the local optimization variable and an auxiliary gradient-tracking or correction term—our MILE reduces this cost by requiring each agent to transmit only a single 
$n$-dimensional vector. This reduction in communication overhead per round makes MILE particularly appealing for bandwidth-constrained or large-scale decentralized learning scenarios.


\item \textbf{Efficient Memory:} In addition to its communication efficiency, MILE is also memory-efficient. Each agent only needs to store two 
$n$-dimensional vectors—its current and previous optimization variables. In contrast, counterpart algorithms such as K-GT \cite{taolin1} require each agent to maintain three or more auxiliary vectors. This reduction in memory overhead makes MILE particularly suitable for deployment on resource-constrained devices. Table \ref{sample-table2} provides a detailed comparison of the memory and communication requirements between MILE and existing counterpart algorithms.



\end{itemize}

\section{Preliminaries}\label{problem_section}
\subsection{Notations}
We use \(\mathbb{Z}\), \(\mathbb{Z}^{+}\), \(\mathbb{R}^n\), and \(\mathbb{R}^{m\times n}\) to denote integers, positive integers, real \(n\)-dimensional vectors, and real \((m\times n)\)-dimensional matrices, respectively. We use \(\mathbf{1}_N\in\mathbb{R}^N\) and \(\mathbf{0}_N\in\mathbb{R}^N\) to denote the \(n\)-dimensional vectors with all elements being \(1\) and $0$, respectively. We use \(\mathbf{I}_{N}\in\mathbb{R}^{N\times N}\) to denote the identical matrix.  \([x]_i\) and \([A]_{ij}\) denote  the \(i^{th}\) element of  \(x\in\mathbb{R}^{n}\) and the \((i,j)^{th}\) element of  \(A\in\mathbb{R}^{n\times m}\), respectively. We represent the Euclidean norm of \(x\in\mathbb{R}^n\) as \(\Vert x\Vert=\sqrt{\sum^n_{j=1}[x]^2_j}\) and the Frobenius norm of $A\in\mathbb{R}^{n\times m}$ as $\Vert A\Vert_{F}=\sqrt{\sum^{n}_{i=1}\sum^{m}_{j=1}[A]^2_{ij}}$. We denote the transposes of \(y\in\mathbb{R}^n\) and \(A\in\mathbb{R}^{m\times n}\) as \(y^{\bf T}\) and \(A^{\bf T}\), respectively. For two symmetric matrices \(A,B\in\mathbb{R}^{N\times N}\), we use \(A\succ B\) (\(A\succeq B\)) to denote that \(A-B\) is positive definite (positive semidefinite). Given two positive integers \(a\) and \(b\), we use \(a\ \textbf{mod}\ b\) to represent the remainder of the division of \(a\) by \(b\). We use \({O}(c(t))\)  to represent sequences \(d(t)\) satisfying \(\limsup_{t\rightarrow+\infty}|\frac{d(t)}{c(t)}|<\infty\).

\subsection{Problem Settings}

We consider an undirected network \( \mathcal{G} = \{\mathcal{V}, \mathcal{E}\} \), where \( \mathcal{V} = \{1, 2, \dots, N\} \) denotes the set of agents and \( \mathcal{E} \subseteq \mathcal{V} \times \mathcal{V} \) represents the set of undirected edges. An undirected edge \( (i, j) \in \mathcal{E} \) implies a bidirectional communication link between agents \( i \) and \( j \), allowing them to exchange information. All agents cooperatively solve the following global optimization problem:
\begin{align}\label{op_problem}
\min_{x \in \mathbb{R}^n} f(x) = \frac{1}{N} \sum_{i=1}^{N} f_i(x),
\end{align}
where \( f_i : \mathbb{R}^n \rightarrow \mathbb{R} \) denotes the local objective function only accessible to agent \( i \in \mathcal{V} \). We make the following standard assumption on the local objective functions $f_i(x)$:
\begin{assumption}\label{smooth_assumption}
The objective function $f_i(x)$ is $L$-smooth over $\mathbb{R}^n$, that is, there exists a constant $L>0$ such that $$\Vert \nabla f_i(x)-\nabla f_i(y)\Vert\leq L \Vert x-y\Vert$$ holds for any $x,y\in\mathbb{R}^n$.
\end{assumption}
From Assumption \ref{smooth_assumption} and the definition (\ref{op_problem}) of the global objective function $f(x)$, we can easily obtain that $f(x)$ also satisfies the $L$-smooth property over $\mathbb{R}^n$. In addition, we make the following standard assumption to make sure that (\ref{op_problem}) has at least one solution:
\begin{assumption}\label{non_empty}
The optimal solution set 
\begin{align*}
\mathcal{X}^*=\{x^*\in\mathbb{R}^n |x^*=\arg\min_{x\in\mathbb{R}^n} f(x)\}
\end{align*}
is not empty, i.e., there exists at least one $x^*\in\mathbb{R}^n$ such that $f(x^*)\leq f(x)$  holds for any $x\in\mathbb{R}^n$. 
\end{assumption}
Assumptions \ref{smooth_assumption} and \ref{non_empty} are mild and widely adopted in the decentralized nonconvex optimization literature \cite{LED,taolin1,Ming_Yan2}. They are more general than imposing convexity or the Polyak-Lojasiewicz condition.  In addition, we make the following standard assumption \cite{WeiShi1,Ming_Yan1,WeiShi2} on the communication network \( \mathcal{G} = \{\mathcal{V}, \mathcal{E}\} \).
\begin{assumption}\label{mixing_matrix} The undirected network $\mathcal{G}=\{\mathcal{V},\mathcal{E}\}$ is connected. The mixing matrix  $W\in\mathbb{R}^{N\times N}$ satisfies:
\begin{itemize}
    \item [1)] If $i\neq j$ and $(i,j)\notin\mathcal{E}$, then $w_{ij}=0$;
    \item [2)] $W=W^{\bf T}$;
    \item [3)]  ${\bf Null}(\mathbf{I}_N-W)={\bf span}(\mathbf{1}_N)$;
    \item [4)] $2\mathbf{I}_N \succeq W+\mathbf{I}_N \succ\mathbf{0}_{N\times N}$.
\end{itemize}
\end{assumption}
From Assumption \ref{mixing_matrix}, we know that the mixing matrix $W$ has $N$ real eigenvalues satisfying 
\begin{align}\label{eigenvalues_property}
1=\lambda_1>\lambda_2\geq \dots\geq \lambda_N>-1.
\end{align}
Next, we present our new algorithm and rigorously characterize its convergence performance.

\section{Algorithm Description}\label{step_details}

In this section, we propose a decentralized nonconvex optimization algorithm  (MILE) that has \underline{m}ultiple \underline{i}terations of \underline{l}ocal updat\underline{e}s between two consecutive communications. In the next section, we will prove that MILE can ensure both consensus and optimality in the nonconvex case, which has not been achieved  before under multiple local updates. MILE is inspired by decentralized optimization algorithms EXTRA~\cite{WeiShi1}, ED~\cite{ED_Yuan}, NIDS~\cite{Ming_Yan1}, and $D^2$~\cite{Ming_Yan2}, but with a fundamental difference. Specifically, these existing algorithms \cite{WeiShi1,ED_Yuan,Ming_Yan1,Ming_Yan2} only perform one local update between two consecutive communications. Directly incorporating multiple local updates in these methods often leads to divergence, as noted in Remark~2 of~\cite{LED}. In contrast, by judiciously designing a weight parameter, MILE ensures accurate convergence even when multiple local updates are incorporated.





Some notations should be explained before introducing our algorithm. The stepsize and the number of local updates of  {MILE} are denoted as $\alpha>0$ and $\tau\in\mathbb{Z}^{+}$, respectively. The optimization variable of agent $i\in\mathcal{V}$ at iteration time $t$ is denoted as $x_i(t)$. Due to the recursive local update mechanism, two initial values, \(x_i(-1)\) and \(x_i(0)\), are required. For any $i\in\mathcal{V}$, the value  \(x_i(-1)\) can be arbitrarily chosen in \(\mathbb{R}^n\), whereas \(x_i(0)\) should be set according to the following rule:
\begin{align}\label{initial_value_design1}
x_i(0)=x_i(-1)-\alpha \nabla f_i(x_i(-1)).
\end{align}
Now, we are in a position to present our algorithm {MILE}.
\begin{algorithm}[htpb]
   \caption{ {MILE} }
   \label{algorithm_recur}
\begin{algorithmic}
\STATE{\textbf{Initialization}: number of local updates $\tau\in\mathbb{Z}^{+}$, stepsize $\alpha$, and weight parameter $\xi\in(0,\frac{2}{\tau+3})$. Agent $i$ sets $\widetilde{w}_{ii}=(1-\xi)+\xi w_{ii}$ and $\widetilde{w}_{ij}=\xi w_{ij}$ for any $i\in\mathcal{V}$ and $j\neq i$.}
   \FOR{$t=0$ {\bfseries to} $T$}
   \FOR{each agent $i=1,2,\cdots,N$ in parallel}
   
   \IF{$t\ \textbf{mod}\ \tau =0$}\STATE 
   Each agent $i$ receives information from its neighbors and updates its local variable as
   \begin{align}\label{equation_state_new_condition}
       x_{i}(t&+1)=\sum^{N}_{j=1}\widetilde{w}_{ij} \Big\{2x_j(t)-x_j(t-1)\nonumber\\
       &-\alpha \nabla f_j(x_j(t))+\alpha \nabla f_j(x_j(t-1))\Big\}.
   \end{align}
   \ELSE{ \STATE Each agent $i$ does local update}
   \begin{align}\label{equation_state_new_condition2}
       x_{i}(t+1)= &2x_i(t)-x_i(t-1)-\alpha \nabla f_i(x_i(t))\nonumber\\
       &+\alpha \nabla f_i(x_i(t-1)).
   \end{align}
   \ENDIF
   \ENDFOR
   \ENDFOR
\end{algorithmic}
\end{algorithm}

In {MILE}, at each communication round (i.e., when \( t\ \textbf{mod}\ \tau = 0 \)), agent~\(i\) transmits a single \(n\)-dimensional vector:
\begin{align}\label{linear_combination}
2x_i(t) - x_i(t-1) - \alpha \nabla f_i(x_i(t)) + \alpha \nabla f_i(x_i(t-1))
\end{align}
to its neighboring agents. Upon receiving this vector, each agent performs \(\tau\) local updates according to \eqref{equation_state_new_condition} and~\eqref{equation_state_new_condition2} in Algorithm~\ref{algorithm_recur}. A key component of {MILE}'s update rule is the weight parameter \( \xi \in (0, \frac{2}{\tau + 3} ) \), which influences the update through the $\widetilde{w}_{ij}$ parameter in \eqref{equation_state_new_condition}. Selecting \( \xi \) within the prescribed interval is essential to ensure the exact convergence of Algorithm~\ref{algorithm_recur}. The theoretical basis for this selection is detailed in Section \ref{importance_weight_parameter}, drawing on matrix decomposition theory.

{MILE} is communication-efficient. At each communication round, each agent transmits only a single \( n \)-dimensional vector (see~\eqref{linear_combination}) to its neighboring agents. In contrast, existing methods with multiple local updates in \cite{taolin1,WeiShi3,kunyuan1} require transmitting two \( n \)-dimensional vectors between two interacting agents in every communication round: the local optimization variable and an auxiliary variable.

{MILE} is also memory-efficient, despite requiring storage of the optimization variable from the previous iteration. For example, in K-GT~\cite{taolin1}, each agent maintains three $n$-dimensional vectors: the local optimization variable, a gradient-tracking auxiliary variable, and another auxiliary variable for drift correction (note that we do not consider the storage of gradients, since they can be computed directly from the optimization variable). In contrast, {MILE} requires each agent to store only two $n$-dimensional vectors: the current and previous local optimization variables. Table~\ref{sample-table2} summarizes the memory and communication requirements of {MILE} and existing counterpart algorithms.

\section{Convergence Analysis}\label{full_batch}
In this section, we provide a comprehensive convergence analysis of {MILE}. Specifically, section~\ref{matrix_form} derives the equivalent matrix representation of Algorithm~\ref{algorithm_recur}. Section~\ref{consensus_analysis} establishes the consensus of all agents. Section~\ref{Convergence_Property_of_DoRecu_full} establishes the optimality of {MILE}. Finally, Section~\ref{importance_weight_parameter} provides a theoretical justification for selecting the weight parameter \( \xi \in (0, \frac{2}{\tau + 3} ) \) in Algorithm~\ref{algorithm_recur}.

\subsection{Matrix Form of {MILE} }\label{matrix_form}
We define the following matrices and vectors:
\begin{equation}\label{matrix_definition}
\left\{
\begin{aligned}
 X(t)&= [x_1(t), x_2(t), \cdots, x_N(t)] \in \mathbb{R}^{n \times N}, \\
\overline{\nabla f}(X(t)) &= \frac{1}{N} \sum_{i=1}^N \nabla f_i(x_i(t)),\  \overline{X}(t)= \frac{1}{N} \sum_{i=1}^N x_i(t),\\
\nabla f(X(t)) &= [\nabla f_1(x_1(t)), \cdots, \nabla f_N(x_N(t))].
\end{aligned}
\right.
\end{equation}
We also define a time-varying mixing matrix \( W(t) \) as follows:
\begin{equation}\label{matrix_W}
W(t) = 
\begin{cases}
(1 - \xi)\mathbf{I}_N + \xi W, & \text{if } t = k\tau, \\
\quad\quad\quad \mathbf{I}_N, & \text{otherwise},
\end{cases}
\end{equation}
where \( \xi \in (0,\frac{2}{\tau+3} )\) is the weight parameter and \( \tau \in \mathbb{Z}^{+} \) denotes the number of local updates.

Using the definitions in \eqref{matrix_definition} and \eqref{matrix_W}, the update rules \eqref{equation_state_new_condition} and \eqref{equation_state_new_condition2} in Algorithm~\ref{algorithm_recur} can be equivalently rewritten as
\begin{align}
X(t{+}1) &= 2X(t)W(t) - X(t{-}1)W(t) \nonumber\\
&\quad - \alpha \nabla f(X(t))W(t) + \alpha \nabla f(X(t{-}1))W(t). \label{convergence_analysis_process1}
\end{align}
Multiplying both sides of \eqref{convergence_analysis_process1} by \( \frac{1}{N} \mathbf{1}_N \) and using Assumption~\ref{mixing_matrix} (i.e., \( W(t)\mathbf{1}_N = \mathbf{1}_N \)), we obtain
\begin{align}
\overline{X}(t{+}1) &= 2\overline{X}(t) - \overline{X}(t{-}1) \nonumber\\
&\quad - \alpha \overline{\nabla f}(X(t)) + \alpha \overline{\nabla f}(X(t{-}1)). \label{averaging_property_X}
\end{align}
Rearranging terms in \eqref{averaging_property_X} yields
\begin{align}
\overline{X}(t{+}1) - \overline{X}(t) 
&= \overline{X}(t) - \overline{X}(t{-}1) \nonumber\\
&\quad - \alpha\big( \overline{\nabla f}(X(t)) - \overline{\nabla f}(X(t{-}1)) \big). \label{result_without_stochastic_1}
\end{align}
Applying mathematical induction to \eqref{result_without_stochastic_1} gives
\begin{align}
\overline{X}(t{+}1) - \overline{X}(t) 
&= \overline{X}(0) - \overline{X}(-1) \nonumber\\
&\quad - \alpha\big( \overline{\nabla f}(X(t)) - \overline{\nabla f}(X(-1)) \big). \label{result_without_stochastic}
\end{align}
Combining \eqref{initial_value_design1} and \eqref{result_without_stochastic} yields
\begin{align}
\overline{X}(t{+}1) = \overline{X}(t) - \alpha \, \overline{\nabla f}(X(t)). \label{average_xt_form}
\end{align}
According to Assumption~\ref{smooth_assumption}, \eqref{average_xt_form} further implies
\begin{align}
f(\overline{X}(t{+}1)) 
&\leq f(\overline{X}(t)) - \alpha \langle \nabla f(\overline{X}(t)), \overline{\nabla f}(X(t)) \rangle \nonumber\\
&\quad + \frac{L \alpha^2}{2} \Vert \overline{\nabla f}(X(t)) \Vert^2. \label{convergence_analysis_process2_1}
\end{align}
Applying the equality \( \|a + b\|^2 = \|a\|^2 + \|b\|^2 + 2\langle a, b \rangle \) to the right-hand side of \eqref{convergence_analysis_process2_1} leads to 
\begin{align}
&f(\overline{X}(t{+}1)) \leq f(\overline{X}(t)) - \bigl( \frac{\alpha}{2} - \frac{L \alpha^2}{2} \bigr) \Vert \overline{\nabla f}(X(t)) \Vert^2  \nonumber\\
& - \frac{\alpha}{2} \Vert \nabla f(\overline{X}(t)) \Vert^2 + \frac{\alpha L^2}{2N} \sum_{i=1}^N \Vert \overline{X}(t) - x_i(t) \Vert^2. \label{convergence_analysis_process2}
\end{align}
Summing both sides of~\eqref{convergence_analysis_process2} from \( t = 1 \) to \( T \) and rearranging terms, we can obtain
\begin{align}
&\sum_{t=1}^T \left\{ \Vert \nabla f(\overline{X}(t)) \Vert^2 + (1 - L\alpha) \Vert \overline{\nabla f}(X(t)) \Vert^2 \right\} \nonumber\\
\leq& \frac{2}{\alpha} \left( f(\overline{X}(1)) - f(x^*) \right) + \frac{L^2}{N} \sum_{t=1}^T \sum_{i=1}^N \Vert \overline{X}(t) - x_i(t) \Vert^2. \label{important_requiring_consensus}
\end{align}

\subsection{Consensus Analysis}\label{consensus_analysis}
We define $\widetilde{W} = (1 - \xi)\mathbf{I}_N + \xi W$ whose  eigenvalues $\{\rho_1, \rho_2, \cdots, \rho_N\}$ can be verified to satisfy 
\begin{align}\label{definition_rhoi_from_lambdai}
\rho_i = 1 - \xi + \xi \lambda_i,
\end{align}
where $\{\lambda_i\}_{i=1}^N$ are the eigenvalues of $W$. From Assumption~\ref{mixing_matrix} and \eqref{matrix_W}, $W(t)$ can be equivalently represented using an orthogonal matrix 
\begin{align}\label{orthoghnal_matrix}
P = [v_1, v_2, \cdots, v_N] \in \mathbb{R}^{N \times N}
\end{align}
satisfying $P^{\top}P = \mathbf{I}_N$ and a diagonal matrix $\Lambda(t)$ as follows:
\begin{align}\label{matrix_form_time_varying}
W(t) = P \Lambda(t) P^{\top},
\end{align}
where 
\begin{align}\label{definition_Lambda}
\Lambda(t) = \mathrm{diag}\{\rho_1(t), \rho_2(t), \cdots, \rho_N(t)\}
\end{align}
and
\begin{equation}\label{definition_rho_add}
\rho(t)=\left\{
\begin{aligned}
\rho_i, \quad &t = k \tau,\\
1,\quad  & t \neq k \tau.
\end{aligned}
\right.
\end{equation}
From \eqref{definition_rhoi_from_lambdai}, \eqref{orthoghnal_matrix}, \eqref{matrix_form_time_varying}, \eqref{definition_Lambda}, \eqref{definition_rho_add}, and  Assumption \ref{mixing_matrix}, we can obtain
\begin{align}\label{lambda1_rho1_property}
\lambda_1=\rho_1=1\quad {\text{and}}\quad v_1 = \frac{1}{\sqrt{N}} \mathbf{1}_N.
\end{align}
Moreover, if we define
\begin{equation}\label{definition_y_h}
\left\{
\begin{aligned}
Y(t) &= X(t)P = [y_1(t), y_2(t), \cdots, y_N(t)],\\
H(t) &= {\nabla f}(X(t))P = [h_1(t), h_2(t), \cdots, h_N(t)],
\end{aligned}
\right.
\end{equation}
from \eqref{convergence_analysis_process1} and \eqref{matrix_form_time_varying}, we arrive at
\begin{align}\label{convergence_analysis_process3}
Y(t+1) = &\, 2Y(t)\Lambda(t) - Y(t-1)\Lambda(t) - \alpha H(t)\Lambda(t) \nonumber\\
& + \alpha H(t-1)\Lambda(t).
\end{align}
Using \eqref{definition_Lambda}, \eqref{definition_rho_add}, and \eqref{definition_y_h}, \eqref{convergence_analysis_process3} can be equivalently rewritten as
\begin{align}\label{convergence_analysis_process4}
y_i(t+1) =& \rho_i(t) \bigl( 2y_i(t) - y_i(t-1) - \alpha h_i(t) \nonumber\\
&+ \alpha h_i(t-1) \bigr)
\end{align}
for any $i = 1, 2, \cdots, N$.

We denote the set of $n$-dimensional unit vectors as $\{e_i\}_{i=1}^n$, i.e., 
\begin{equation}\label{definition_e}
[e_i]_j=\left\{
\begin{aligned}
1, \quad & j = i,\\
0,\quad  & j \neq i.
\end{aligned}
\right.
\end{equation}
Next, we analyze the consensus error by expressing it in terms of the variables \( y_i(t) \). From the definitions \eqref{matrix_definition} and \eqref{definition_e}, we have
\begin{align}\label{add_euqation1_explantion}
\sum_{i=1}^N \Vert \overline{X}(t) - x_i(t) \Vert^2 = \sum_{i=1}^N \Vert X(t) e_i - \frac{1}{N} X(t) \mathbf{1}_N \Vert^2.
\end{align}
Based on the definition of $\Vert\cdot\Vert_{F}$, \eqref{orthoghnal_matrix}, \eqref{lambda1_rho1_property}, and the orthogonal property $P P^\top=\mathbf{I}_N$ , \eqref{add_euqation1_explantion} further implies
\begin{align}\label{add_euqation3_explantion}
\sum_{i=1}^N \Vert \overline{X}(t) - x_i(t) \Vert^2 = \left\Vert X(t) P \left(\mathbf{I}_N - e_1 e_1^\top \right) P^\top \right\Vert_F^2.
\end{align}
Substituting the definition of $Y(t)$ given in \eqref{definition_y_h} into  \eqref{add_euqation3_explantion} yields
\begin{align*}
\sum_{i=1}^N \Vert \overline{X}(t) - x_i(t) \Vert^2 = \left\Vert Y(t) \left(\mathbf{I}_N - e_1 e_1^\top \right) \right\Vert_F^2,
\end{align*}
which further implies
\begin{align}\label{consensus_analysis_important}
\sum_{i=1}^N \Vert \overline{X}(t) - x_i(t) \Vert^2 = \sum_{i=2}^N \Vert y_i(t) \Vert^2.
\end{align}
As a result, obtaining a tractable expression for \( y_i(t) \) for the recursive relation \eqref{convergence_analysis_process4} becomes essential for analyzing the consensus behavior of {MILE}. To facilitate the analysis, we introduce Lemma \ref{lemma_recursive}, which is crucial for establishing, for the first time, both consensus and optimality in decentralized nonconvex optimization with multiple local updates.




\begin{lemma}\label{lemma_recursive}
For a scalar sequence $\{a(t)\}^{\infty}_{t=0}$ satisfying the recursive relation
\begin{align}\label{lemma_recursive1}
a(t+1)={\rho}(t)\bigl(2a(t)-a(t-1)+b(t)-b(t-1)\bigr),
\end{align}
where 
\begin{equation}\label{definition_rho}
{\rho}(t)=\left\{
\begin{aligned}
\ \rho, &\quad t=k\tau,\\
\  1, &\quad t\neq k\tau,
\end{aligned}
\right.
\end{equation}
$\frac{\tau-1}{\tau+3}<\rho<1$, and $k\in\mathbb{Z}$, we have the following results for $1\leq p\leq \tau$ (with $a(0)\in\mathbb{R}$ and $a(1)\in\mathbb{R}$ being initial values of the sequence)
\begin{align}
a(k\tau+p)=&pa(k\tau+1)-(p-1)a(k\tau)\nonumber\\
&+\sum^{p-1}_{q=1}\sum^{q}_{j=1}\bigl\{b(k\tau+j)-b(k\tau+j-1)\bigr\},\label{analytical_formula1}\\
a(k\tau+1)=&(\sqrt{\rho})^k\bigl(F_{11}(k)a(1)+F_{12}(k)a(0)\bigr)\nonumber\\
&+\sum^{k-1}_{s=0}(\sqrt{\rho})^{k-1-s}\bigl\{F_{11}(k-1-s)G_{11}(s)\nonumber\\
&+F_{12}(k-1-s)G_{21}(s)\bigr\},\label{analytical_formula2}\\
a(k\tau)=&(\sqrt{\rho})^k\bigl(F_{21}(k)a(1)+F_{22}(k)a(0)\bigr)\nonumber\\
&+\sum^{k-1}_{s=0}(\sqrt{\rho})^{k-1-s}\bigl\{F_{21}(k-1-s)G_{11}(s)\nonumber\\
&+F_{22}(k-1-s)G_{21}(s)\bigr\},\label{analytical_formula3}
\end{align}
where $\cos(\theta)=\frac{\rho(\tau+1)+1-\tau}{2\sqrt{\rho}}$,
$\sin(\theta)=\frac{\sqrt{4\rho-[\rho(\tau+1)+(1-\tau)]^2}}{2\sqrt{\rho}}$,  
\begin{align*}
F_{11}(s)&=\frac{\sin((s+1)\theta)}{\sin(\theta)}+\frac{(\tau-1)\sin(s\theta)}{\sqrt{\rho}\sin(\theta)},\\
F_{12}(s)&=-\frac{\tau\sqrt{\rho}\sin(s\theta)}{\sin(\theta)},\quad F_{21}(s)=\frac{\tau\sin(s\theta)}{\sqrt{\rho}\sin(\theta)},\\
F_{22}(s)&=- \frac{\sin((s-1)\theta)}{\sin(\theta)}-\frac{(\tau-1)\sin(s\theta)}{\sqrt{\rho}\sin(\theta)},\\
G_{11}(s)&={\rho}\sum^{\tau}_{j=1}j[b(s\tau+\tau+1-j)-b(s\tau+\tau-j)],\\
G_{21}(s)&=\sum^{\tau}_{j=1}(j-1)[b(s\tau+\tau+1-j)-b(s\tau+\tau-j)].
\end{align*}

\end{lemma}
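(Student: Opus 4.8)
The plan is to split the recursion \eqref{lemma_recursive1} into its \emph{intra-period} behavior (the steps with $\rho(t)=1$) and its \emph{inter-period} behavior (the single $\rho$-weighted step at each $t=k\tau$), and then to ``lift'' the per-period evolution into a two-dimensional linear recursion whose matrix powers admit a closed trigonometric form. First I would establish \eqref{analytical_formula1}. For $k\tau<t<(k+1)\tau$ we have $\rho(t)=1$, so \eqref{lemma_recursive1} reduces to $a(t+1)-a(t)=\bigl(a(t)-a(t-1)\bigr)+\bigl(b(t)-b(t-1)\bigr)$. Writing $d(t)=a(t)-a(t-1)$ and telescoping from $k\tau+1$ gives $d(k\tau+j)=d(k\tau+1)+\sum_{q=1}^{j-1}\bigl(b(k\tau+q)-b(k\tau+q-1)\bigr)$; summing $d(k\tau+j)$ over $j=1,\dots,p$ and using $d(k\tau+1)=a(k\tau+1)-a(k\tau)$ yields \eqref{analytical_formula1} after a routine reindexing of the double sum. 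This step is elementary precisely because the unforced characteristic polynomial $z^2-2z+1$ has the double root $z=1$, producing the affine-in-$p$ coefficients $p$ and $-(p-1)$.

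The heart of the argument is the lifting. I would define the per-period state $z(k)=\bigl(a(k\tau+1),\,a(k\tau)\bigr)^{\!\top}$. Evaluating \eqref{analytical_formula1} at $p=\tau$ and $p=\tau-1$ expresses $a(k\tau)$ and $a(k\tau-1)$ linearly in $z(k-1)$ plus $b$-forcing, and substituting these into the single $\rho$-weighted step $a(k\tau+1)=\rho\bigl(2a(k\tau)-a(k\tau-1)+b(k\tau)-b(k\tau-1)\bigr)$ produces a first-order recursion $z(k)=Mz(k-1)+g(k-1)$ with
\[
M=\begin{pmatrix}\rho(\tau+1) & -\rho\tau\\ \tau & -(\tau-1)\end{pmatrix}.
\]
A short computation collects all $b$-differences of period $s=k-1$ into the forcing vector $g(s)=\bigl(G_{11}(s),\,G_{21}(s)\bigr)^{\!\top}$: the combinatorial coefficients defining $G_{11}$ and $G_{21}$ arise from merging the two double sums coming from the $p=\tau$ and $p=\tau-1$ instances of \eqref{analytical_formula1} with the direct term $b(k\tau)-b(k\tau-1)$, after the reindexing $j\mapsto\tau+1-j$.

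It then remains to compute $M^k$. I would record $\det M=\rho$ and $\operatorname{tr}M=\rho(\tau+1)+1-\tau=2\sqrt{\rho}\cos\theta$, so that the hypothesis $\tfrac{\tau-1}{\tau+3}<\rho<1$ guarantees $(\operatorname{tr}M)^2<4\det M$ and hence complex eigenvalues $\sqrt{\rho}\,e^{\pm i\theta}$ with $\sin\theta>0$ as stated. Cayley--Hamilton gives $M^2=(\operatorname{tr}M)M-\rho\mathbf{I}$, and a one-line induction using $\sin((k{+}1)\theta)=2\cos\theta\sin(k\theta)-\sin((k{-}1)\theta)$ yields $M^k=(\sqrt{\rho})^{k-1}\bigl[\tfrac{\sin(k\theta)}{\sin\theta}M-\sqrt{\rho}\tfrac{\sin((k-1)\theta)}{\sin\theta}\mathbf{I}\bigr]$. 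Reading off the four entries shows $M^k=(\sqrt{\rho})^{k}F(k)$ with $F(k)=\bigl(F_{ij}(k)\bigr)$, and unrolling $z(k)=M^kz(0)+\sum_{s=0}^{k-1}M^{k-1-s}g(s)$ componentwise, with $z(0)=\bigl(a(1),a(0)\bigr)^{\!\top}$, gives exactly \eqref{analytical_formula2} and \eqref{analytical_formula3}.

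The main obstacle I anticipate is the bookkeeping in the lifting step rather than any deep difficulty: forming $M$ and, above all, verifying that the scattered $b$-differences assemble into the compact $G_{11},G_{21}$ requires careful double-sum reindexing, while matching the entries of $(\sqrt{\rho})^{-k}M^k$ to the $F_{ij}$ hinges on the identity $\sqrt{\rho}\sin((k{+}1)\theta)=(\operatorname{tr}M)\sin(k\theta)-\sqrt{\rho}\sin((k{-}1)\theta)$. By contrast, the eigenvalue analysis is routine once the relations $\det M=\rho$ and $\operatorname{tr}M=2\sqrt{\rho}\cos\theta$ are recognized, and the discriminant sign follows directly from the admissible range of $\rho$.
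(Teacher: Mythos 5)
Your proposal is correct and follows essentially the same route as the paper: telescoping the intra-period (unit-$\rho$) steps to get \eqref{analytical_formula1}, lifting the dynamics at communication instants into a two-dimensional recursion with transition matrix $M=F_1=\bigl(\begin{smallmatrix}\rho(\tau+1) & -\rho\tau\\ \tau & 1-\tau\end{smallmatrix}\bigr)$, exploiting the complex eigenvalues $\sqrt{\rho}\,e^{\pm i\theta}$ guaranteed by $\frac{\tau-1}{\tau+3}<\rho<1$, and unrolling. The only (immaterial) differences are that you obtain $M$ by substituting the intra-period formula rather than multiplying out $\prod_{i=0}^{\tau-1}A(\tau-i)$, and you compute $M^k$ via Cayley--Hamilton and the Chebyshev-type recurrence for $\sin(k\theta)$ instead of the paper's explicit eigendecomposition and Euler's formula.
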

\begin{proof}
The recursive relation in (\ref{lemma_recursive1}) can be equivalently expressed as the following matrix form:
\begin{align}\label{appendix1_recursive2}
x(t+1)=A(t)x(t)+B(t)u(t),
\end{align}
where
\begin{equation}\label{definition_xaub}
\left\{
\begin{aligned}
x(t)=&\left[                 
  \begin{array}{c}   
    a(t) \\ 
    a(t-1) \\ 
  \end{array}
\right], \quad    
A(t)=\left[                 
  \begin{array}{cc}   
    2{\rho}(t) & -{\rho}(t) \\ 
    1 & 0 \\ 
  \end{array}
\right],\\
u(t)=&\left[                 
  \begin{array}{c}   
    b(t) \\ 
    b(t-1) \\ 
  \end{array}
\right],\quad    
B(t)=\left[                 
  \begin{array}{cc}   
    {\rho}(t) & -{\rho}(t) \\ 
    0 & 0 \\ 
  \end{array}
\right].    
\end{aligned}
\right.
\end{equation}
Because $\rho(t)$ changes periodically, equation \eqref{appendix1_recursive2} defines a periodic system. By applying the lifting approach from \cite{lifting}, (\ref{appendix1_recursive2}) can be equivalently expressed as the following system:
\begin{align}\label{appendix1_recursive3}
x((k+1)\tau+1)=F_1x(k\tau+1)+G_1u_1(k),
\end{align}
where 
\begin{equation}\label{definition_after_lifting}
\left\{
\begin{aligned}
G_1=&\left[                 
    C(1),  \cdots, C(j),  \cdots, C(\tau) 
\right]\in\mathbb{R}^{2\times 2\tau},\\
u_1(k)=&\left[        
   u^{\bf T}(k\tau+1), \cdots,u^{\bf T}(k\tau+\tau) 
\right]^{\bf T}\in\mathbb{R}^{2\tau},\\
F_1=&\prod^{\tau-1}_{i=0}A(\tau-i), \\
C(j)=&\left[                 
  \begin{array}{cc}   
    {\rho} (\tau-j+1) & -{\rho} (\tau-j+1) \\ 
    \tau-j & j-\tau \\ 
  \end{array}
\right].
\end{aligned}
\right.
\end{equation}

Applying mathematical induction to (\ref{appendix1_recursive3}) yields
\begin{align}\label{appendix1_recursive4}
x(k\tau+1)=F^{k}_1x(1)+\sum^{k-1}_{s=0}F^{k-1-s}_1 G_1 u_1(s).
\end{align}
Using the definitions of $G_1$ and $u_1(k)$ in \eqref{definition_after_lifting}, along with the definition of $u(t)$ in \eqref{definition_xaub}, we have
\begin{align}
  G_1 u_1(s)=\left[                 
  \begin{array}{c}   
   G_{11}(s) \\ 
   G_{21}(s)
  \end{array}
\right],\label{definition_G1}
\end{align} 
where
\begin{align*}
G_{11}(s)&={\rho}\sum^{\tau}_{j=1}j[b(s\tau+\tau+1-j)-b(s\tau+\tau-j)],\\
G_{21}(s)&=\sum^{\tau}_{j=1}(j-1)[b(s\tau+\tau+1-j)-b(s\tau+\tau-j)].
\end{align*}

Next, we establish the properties of  $F^{s}_1$ for $s=1,2,\cdots,k$. Substituting \eqref{definition_rho} and \eqref{definition_xaub} into \eqref{definition_after_lifting} yields
\begin{equation}\label{definition_F1}
F_1=\left[                 
  \begin{array}{cc}   
    {\rho}(\tau+1) & -{\rho}\tau \\ 
    \tau & -\tau+1 \\ 
  \end{array}
\right].          
\end{equation}
The eigenvalues $\mu_1$ and $\mu_2$ of the matrix $F_1$ satisfy 
\begin{align}\label{appendix1_quadratic_equation}
\mu^2-[\rho(\tau+1)+(1-\tau)]\mu +\rho=0.
\end{align}
Since the following relation holds for the quadratic equation in \eqref{appendix1_quadratic_equation} when $\rho$ satisfies $\frac{\tau-1}{\tau+3}<\rho<1$:
\begin{align*}
[\rho(\tau+1)+(1-\tau)]^2-4\rho<0,
\end{align*}
we can express the solutions to \eqref{appendix1_quadratic_equation} as 
\begin{align*}
\mu_1 &= \sqrt{\rho} \bigl( \cos(\theta) + i \sin(\theta) \bigr), \\
\mu_2 &= \sqrt{\rho} \bigl( \cos(\theta) - i \sin(\theta) \bigr),
\end{align*}
where
\begin{equation}\label{theta_definition}
\begin{cases}
\sin(\theta) = \dfrac{\sqrt{4\rho - [\rho(\tau + 1) + (1 - \tau)]^2}}{2 \sqrt{\rho}}, \\[10pt]
\cos(\theta) = \dfrac{\rho(\tau + 1) + 1 - \tau}{2 \sqrt{\rho}}.
\end{cases}
\end{equation}
From Euler's formula, the eigenvalues can be rewritten as 
\begin{align}\label{eigenvalue_lambda}
\mu_1 = \sqrt{\rho} e^{i\theta},\quad \mu_2 = \sqrt{\rho} e^{-i\theta},
\end{align}
with corresponding eigenvectors
\begin{equation}\label{v1v2_definition1}
v_1 = \begin{bmatrix}   
\dfrac{\sqrt{\rho} e^{i\theta} + \tau - 1}{\tau} \\ 
1  
\end{bmatrix}, \quad    
v_2 = \begin{bmatrix}   
\dfrac{\sqrt{\rho} e^{-i\theta} + \tau - 1}{\tau} \\ 
1  
\end{bmatrix}.
\end{equation}
Thus, we have
\begin{align}\label{v1v2_definition3}
F_1^s = [v_1, v_2]
\begin{bmatrix}   
\mu_1^s & 0 \\ 
0 & \mu_2^s 
\end{bmatrix}
[v_1, v_2]^{-1}
\end{align}
for any \( s = 0, 1, \ldots, k \), where
\begin{align}\label{v1v2_definition2}
[v_1, v_2]^{-1} = \frac{-i \tau}{2 \sqrt{\rho} \sin(\theta)}
\begin{bmatrix}   
1 & -\frac{\sqrt{\rho} e^{-i\theta} + \tau - 1}{\tau} \\ 
-1 & \frac{\sqrt{\rho} e^{i\theta} + \tau - 1}{\tau}
\end{bmatrix}.
\end{align}
From \eqref{v1v2_definition1}, \eqref{v1v2_definition2}, and \eqref{v1v2_definition3}, for any \( s \geq 0 \), we have
\begin{align}
F_1^s 
= & \frac{-i \tau (\sqrt{\rho})^s}{2 \sqrt{\rho} \sin(\theta)}
\begin{bmatrix}   
\frac{\sqrt{\rho} e^{i\theta} + \tau - 1}{\tau} & \frac{\sqrt{\rho} e^{-i\theta} + \tau - 1}{\tau} \\ 
1 & 1
\end{bmatrix} \nonumber \\
& \times
\begin{bmatrix}   
e^{i s \theta} & -\frac{\sqrt{\rho} e^{-i\theta} + \tau - 1}{\tau} e^{i s \theta} \\ 
- e^{-i s \theta} & \frac{\sqrt{\rho} e^{i\theta} + \tau - 1}{\tau} e^{-i s \theta}
\end{bmatrix}.
\label{F_property1}
\end{align}
For notational convenience, we express \( F_1^s \) as
\begin{align}
F_1^s 
= & (\sqrt{\rho})^{s}
\begin{bmatrix}   
F_{11}(s) & F_{12}(s) \\ 
F_{21}(s) & F_{22}(s) 
\end{bmatrix},
\label{F_property1_notation}
\end{align}
where the components \( F_{11}(s) \), \( F_{12}(s) \), \( F_{21}(s) \), and \( F_{22}(s) \) are specified and reexpressed below using Euler’s formula:
\begin{align}
F_{11}(s)=&\frac{-i\tau}{2\sqrt{\rho}\sin(\theta)}\big\{\frac{\sqrt{\rho}}{\tau}(e^{i(s+1)\theta}-e^{-i(s+1)\theta})\nonumber\\
&+\frac{\tau-1}{\tau}(e^{is\theta}-e^{-is\theta})\big\}\nonumber\\
=&\frac{\sin((s+1)\theta)}{\sin(\theta)}+\frac{(\tau-1)\sin(s\theta)}{\sqrt{\rho}\sin(\theta)},\label{F1_EULER}\\
F_{21}(s)=&\frac{-i\tau}{2\sqrt{\rho}\sin(\theta)}\{e^{s\theta i}-e^{-s\theta i}\}=\frac{\tau\sin(s\theta)}{\sqrt{\rho}\sin(\theta)},\label{F2_EULER}\\
F_{22}(s)=&\frac{-i\tau}{2\sqrt{\rho}\sin(\theta)}\big\{\frac{\sqrt{\rho}}{\tau}(e^{-i(s-1)\theta}-e^{i(s-1)\theta})\nonumber\\
&+\frac{\tau-1}{\tau}(e^{-is\theta}-e^{is\theta})\big\}\nonumber\\
=&- \frac{\sin((s-1)\theta)}{\sin(\theta)}-\frac{(\tau-1)\sin(s\theta)}{\sqrt{\rho}\sin(\theta)},\label{F3_EULER}\\
F_{12}(s)=&\frac{\sqrt{\rho}e^{-i\theta}+\tau-1}{\tau}\frac{\sqrt{\rho}e^{i\theta}+\tau-1}{\tau}\frac{-i\tau}{2\sqrt{\rho}\sin(\theta)}\bigl\{e^{-s\theta i}\nonumber\\
&-e^{s\theta i}\bigr\}\nonumber\\
=&\frac{-\sin(s\theta)}{\tau\sqrt{\rho}\sin(\theta)}\big\{(\tau-1+\sqrt{\rho}\cos(\theta))^2+{\rho}\sin^2(\theta)\big\}\nonumber\\
=&-\frac{\sqrt{\rho}\tau\sin(s\theta)}{\sin(\theta)},\label{F4_EULER}
\end{align}
where the third equality of \eqref{F4_EULER} follows from \eqref{theta_definition}.

From \eqref{appendix1_recursive4}, \eqref{definition_G1}, and \eqref{F_property1}, we have
\begin{align}
&\quad\left[                 
  \begin{array}{c}   
   a(k\tau+1) \\ 
   a(k\tau)
  \end{array}
\right]\nonumber\\
&=(\sqrt{\rho})^k\left[                 
  \begin{array}{cc}   
  F_{11}(k) & F_{12}(k) \\ 
   F_{21}(k)  & F_{22}(k) 
  \end{array}
  \right]\left[                 
  \begin{array}{c}   
   a(1) \\ 
   a(0)
  \end{array}
\right]+\sum^{k-1}_{s=0}(\sqrt{\rho})^{k-1-s}\nonumber\\
&\left[                 
  \begin{array}{c}   
  F_{11}(k-1-s)G_{11}(s)+F_{12}(k-1-s)G_{21}(s)\\ 
   F_{21}(k-1-s)G_{11}(s)+F_{22}(k-1-s)G_{21}(s)
  \end{array}
  \right].\label{final_matrix_form_lemma1}
\end{align}
Thus, \eqref{analytical_formula2} and \eqref{analytical_formula3} in Lemma \ref{lemma_recursive} can be obtained from  \eqref{F1_EULER}, \eqref{F2_EULER}, \eqref{F3_EULER}, \eqref{F4_EULER}, and \eqref{final_matrix_form_lemma1}.

Next, we proceed to prove  \eqref{analytical_formula1} in Lemma \ref{lemma_recursive}. Rearranging the terms of \eqref{lemma_recursive1} yields
\begin{align*}
&a(k\tau+q)-a(k\tau+q-1)=a(k\tau+q-1)\\
&\quad\quad-a(k\tau+q-2)+b(k\tau+q-1)-b(k\tau+q-2).
\end{align*}
Applying mathematical induction to the above equality yields
\begin{align*}
&a(k\tau+q)-a(k\tau+q-1)\\
=&a(k\tau+1)-a(k\tau)+\sum^{q-1}_{j=1}\bigl\{b(k\tau+j)-b(k\tau+j-1)\bigr\}
\end{align*}
for $1\leq q\leq \tau$. For any $1\leq p\leq \tau$, summing both sides of the above equation from \( h = 2 \) to \( p \) to obtain
\begin{align*}
a(k\tau+p)&-a(k\tau+1)=(p-1)\bigl\{a(k\tau+1)-a(k\tau)\bigr\}\\
&+\sum^{p-1}_{q=1}\sum^{q}_{j=1}\bigl\{b(k\tau+j)-b(k\tau+j-1)\bigr\}.
\end{align*}
Thus, \eqref{analytical_formula1} in Lemma \ref{lemma_recursive} is established, which completes the proof (note \eqref{analytical_formula2} and \eqref{analytical_formula3} have been established in \eqref{final_matrix_form_lemma1}).
\end{proof}

From Lemma \ref{lemma_recursive}, we can derive the analytical expression for \( y_i(t) \) from the recursive formula~\eqref{convergence_analysis_process4}. To be specific, for any $1\leq p\leq\tau$ and $2\leq i\leq N$, we have
\begin{equation}\label{analytical_formula_of_y}
\left\{
\begin{aligned}
y_i(k\tau&)=(\sqrt{\rho_i})^k\bigl(F_{21,i}(k)y_i(1)+F_{22,i}(k)y_i(0)\bigr)\\
&+\sum^{k-1}_{s=0}(\sqrt{\rho_i})^{k-1-s}\bigl\{F_{21,i}(k-1-s)G_{11,i}(s)\\
&+F_{22}(k-1-s)G_{21,i}(s)\bigr\},\\
y_i(k\tau&+1)=(\sqrt{\rho_i})^k\bigl(F_{11,i}(k)y_i(1)+F_{12,i}(k)y_i(0)\bigr)\\
&+\sum^{k-1}_{s=0}(\sqrt{\rho_i})^{k-1-s}\bigl\{F_{11,i}(k-1-s)G_{11,i}(s)\\
&+F_{12}(k-1-s)G_{21,i}(s)\bigr\},\\
y_i(k\tau&+p)=py_i(k\tau+1)-(p-1)y_i(k\tau)\\
&-\alpha\sum^{p-1}_{q=1}\sum^{q}_{j=1}\bigl\{h_i(k\tau+j)-h_i(k\tau+j-1)\bigr\},
\end{aligned}
\right.
\end{equation}
where  $ F_{21,i}(s)=\frac{\tau\sin(s\theta_i)}{\sqrt{\rho_i}\sin(\theta_i)}$, $F_{12,i}(s)=\frac{-\tau\sqrt{\rho_i}\sin(s\theta_i)}{\sin(\theta_i)}$, $F_{11,i}(s)=\frac{\sin((s+1)\theta_i)}{\sin(\theta_i)}+\frac{(\tau-1)\sin(s\theta_i)}{\sqrt{\rho_i}\sin(\theta_i)}, \ F_{22,i}(s)= \frac{\sin((1-s)\theta_i)}{\sin(\theta_i)}$ $-\frac{(\tau-1)\sin(s\theta_i)}{\sqrt{\rho}\sin(\theta_i)},$  $\sin(\theta_i)=\frac{\sqrt{4\rho_i-[\rho_i(\tau+1)+(1-\tau)]^2}}{2\sqrt{\rho_i}}$, $\cos(\theta_i)=\frac{\rho_i(\tau+1)+1-\tau}{2\sqrt{\rho_i}}$, $G_{11,i}(s)={\alpha\rho}\sum^{\tau}_{j=1}j[h_i(s\tau+\tau-j)-h_i(s\tau+\tau+1-j)]$, and $G_{21,i}(s)=\alpha\sum^{\tau}_{j=1}(j-1)[h_i(s\tau+\tau-j)-h_i(s\tau+\tau+1-j)]$.

Next, we use \eqref{analytical_formula_of_y} to derive a bound on $\Vert y_i(t)\Vert^2$, which can then be used to bound the consensus error as described in \eqref{consensus_analysis_important}. To this end, we first need to bound $\Vert y_i(k\tau+1)\Vert$ and $\Vert y_i(k\tau+1)-y_i(k\tau)\Vert$. 

Using the first and the second relations in \eqref{analytical_formula_of_y}, we can obtain
\begin{align}
&\Vert y_i(k\tau+1)\Vert\nonumber\\
\leq& {\alpha}(\rho\tau+\tau-1) A_1 \sum^{k-1}_{s=0}(\sqrt{{\rho}})^{k-1-s}\Big\{\sum^{\tau}_{j=1}\Vert h_i(\tau s+1+\tau-j)\nonumber\\
&-h_i(\tau s+\tau-j)\Vert\Big\}+A_1 A_2(\sqrt{{\rho}})^k,\label{norm_analyize_form1}
\end{align}
and
\begin{align}
&\Vert y_i(k\tau+1)-y_i(k\tau)\Vert\nonumber\\
\leq&{\alpha}(\rho \tau+\tau-1) A_3\sum^{k-1}_{s=0}(\sqrt{\rho})^{k-1-s}\Big\{\sum^{\tau}_{j=1}\Vert h_i(s\tau+\tau+1-j)\nonumber\\
&-h_i(s\tau+\tau-j)\Vert\Big\}+A_3A_2(\sqrt{\rho})^k,\label{norm_analyize_form2}
\end{align}
where 
\begin{equation}\label{many_definition_A}
\left\{
\begin{aligned}
\rho=&1-\xi+\xi\lambda_2,\\
A_1=&\max_{2\leq i\leq N}\Big\{\frac{2\tau}{\sqrt{4{\rho_i}-[{\rho_i}(\tau+1)+1-\tau]^2}}\Big\},\\
A_2=&\max_{2\leq i\leq N}\bigl\{\Vert y_i(1)\Vert+\Vert y_i(0)\Vert\bigr\},\\
A_3=&\max\Big\{\frac{2\sqrt{\rho}+2}{\sqrt{4\rho-[\rho(\tau+1)+(1-\tau)]^2}},\\
&\frac{2\sqrt{\rho}(1+|\tau\sqrt{\rho}-\frac{\tau-1}{\sqrt{\rho}}|)}{\sqrt{4\rho-[\rho(\tau+1)+(1-\tau)]^2}}\Big\}.
\end{aligned}
\right.
\end{equation}

Moreover, using the third equality in \eqref{analytical_formula_of_y}, we have
\begin{align*}
&\Vert y_i(k\tau+p)\Vert\nonumber\\
\leq &\Vert y_i(k\tau+1)\Vert+(p-1)\Vert y_i(k\tau+1)-y_i(k\tau)\Vert\nonumber\\
&+{\alpha} \sum^{p-1}_{q=1}\sum^{q}_{j=1}\Vert h_i(k\tau+j)-h_i(k\tau+j-1)\Vert.
\end{align*}
Substituting  \eqref{norm_analyize_form1} and \eqref{norm_analyize_form2} into the above inequality yields
\begin{align}
&\Vert y_i(k\tau+p)\Vert\nonumber\\
\leq & pA_4 A_2(\sqrt{{\rho}})^k+{\alpha} \sum^{p-1}_{q=1}\sum^{q}_{j=1}\Vert h_i(k\tau+j)-h_i(k\tau+j-1)\Vert\nonumber\\
&+A_4p{\alpha}(\rho \tau+\tau-1) \sum^{k}_{s=1}(\sqrt{\rho})^{k-s}\sum^{\tau}_{j=1}\Vert h_i(s\tau-j)\nonumber\\
&-h_i(s\tau+1-j)\Vert, \label{norm_analyize_form3}
\end{align}
for any $1\leq p\leq \tau$, where 
\begin{align}
A_4=\max\{A_1,A_3\}.
\end{align}

Next, we proceed to analyze the cumulative consensus error over $(K+1)\tau$ iterations, which include $K+1$ communication rounds. To this end, we first characterize
\begin{align}\label{T_1_computation}
\sum^{T}_{t=1}\Vert y_i(t)\Vert^2=\sum^{K}_{k=0}\sum^{\tau}_{p=1}\Vert y_i(k\tau+p)\Vert^2.
\end{align}
\begin{lemma}\label{computation_of_yi}
There exist $B_2=3\tau^4+\frac{\tau^2(\tau+1)(2\tau+1)(\rho\tau+\tau-1)^2}{2(1-\sqrt{{\rho}})^2} A^2_4$ and $B_1=\frac{\tau(\tau+1)(2\tau+1) }{2(1-{\rho})}A^2_4 A^2_2$ such that
\begin{align}\label{lemma2_equation1}
\sum^{T}_{t=1}\Vert y_i(t)\Vert^2\leq B_1+{\alpha}^2B_2\sum^{T-1}_{t=1}\Vert h_i(t)-h_i(t-1)\Vert^2.
\end{align}
In addition, we also have 
\begin{align*}
\sum^{N}_{i=2}\Vert h_i(t-1)- h_i(t)\Vert^2\leq L^2\sum^{N}_{i=1} \Vert y_i(t)- y_i(t-1)\Vert^2.
\end{align*}
\end{lemma}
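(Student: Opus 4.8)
The plan is to prove the two inequalities separately; the first is the substantive one, and the second is a short consequence of orthogonal invariance. For \eqref{lemma2_equation1}, I would start from the pointwise estimate \eqref{norm_analyize_form3}, which decomposes $\Vert y_i(k\tau+p)\Vert$ into three contributions: a homogeneous term $pA_4A_2(\sqrt{\rho})^k$, a ``within-block'' term $\alpha\sum_{q=1}^{p-1}\sum_{j=1}^q\Vert h_i(k\tau+j)-h_i(k\tau+j-1)\Vert$, and a geometrically-weighted ``cross-block'' term $A_4 p\alpha(\rho\tau+\tau-1)\sum_{s=1}^k(\sqrt{\rho})^{k-s}\sum_{j=1}^\tau\Vert h_i(s\tau-j)-h_i(s\tau+1-j)\Vert$. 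I would then apply $(a+b+c)^2\leq 3(a^2+b^2+c^2)$ and sum over $1\leq p\leq\tau$ and $0\leq k\leq K$ as in \eqref{T_1_computation}, reducing the task to bounding the summed squares of the three pieces.

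The homogeneous contribution is $3A_4^2A_2^2\sum_{k=0}^K\sum_{p=1}^\tau p^2\rho^k$; using $\sum_{p=1}^\tau p^2=\frac{\tau(\tau+1)(2\tau+1)}{6}$ and $\sum_{k=0}^K\rho^k\leq\frac{1}{1-\rho}$ produces exactly $B_1$. For the within-block term I would write the inner double sum as $\sum_{j=1}^{p-1}(p-j)\Vert h_i(k\tau+j)-h_i(k\tau+j-1)\Vert$, apply the Cauchy--Schwarz inequality, and bound the coefficient sum crudely by $\sum_{j}(p-j)^2\leq\tau^3$; summing over the at most $\tau$ values of $p$ then gives a factor $\tau^4$, which together with the leading factor $3$ yields the $3\tau^4$ term in $B_2$. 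For the cross-block term I would apply a weighted Cauchy--Schwarz to the geometric-weighted inner sum, pulling out $\sum_s(\sqrt{\rho})^{k-s}\leq\frac{1}{1-\sqrt{\rho}}$, then interchange the $k$ and $s$ summations to extract a second factor $\frac{1}{1-\sqrt{\rho}}$, and use $\bigl(\sum_{j=1}^\tau(\cdots)\bigr)^2\leq\tau\sum_{j=1}^\tau(\cdots)^2$; combined with $\sum_p p^2$, the factor $3$, and the coefficient $A_4^2(\rho\tau+\tau-1)^2$, this reproduces $\frac{\tau^2(\tau+1)(2\tau+1)(\rho\tau+\tau-1)^2}{2(1-\sqrt{\rho})^2}A_4^2$. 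Adding the two coefficients gives $B_2$.

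The second inequality follows from orthogonal invariance of the Frobenius norm together with $L$-smoothness. Since $H(t)={\nabla f}(X(t))P$ and $P$ is orthogonal, the Frobenius norm is preserved, so I would write $\sum_{i=2}^N\Vert h_i(t-1)-h_i(t)\Vert^2\leq\Vert H(t)-H(t-1)\Vert_F^2=\Vert\nabla f(X(t))-\nabla f(X(t-1))\Vert_F^2$. Expanding column-wise and invoking Assumption~\ref{smooth_assumption} gives $\Vert\nabla f(X(t))-\nabla f(X(t-1))\Vert_F^2=\sum_{i=1}^N\Vert\nabla f_i(x_i(t))-\nabla f_i(x_i(t-1))\Vert^2\leq L^2\sum_{i=1}^N\Vert x_i(t)-x_i(t-1)\Vert^2$, and a final application of orthogonal invariance through $Y(t)=X(t)P$ converts the right-hand side into $L^2\sum_{i=1}^N\Vert y_i(t)-y_i(t-1)\Vert^2$, establishing the claim.

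I expect the main obstacle to be the index bookkeeping in the within-block and cross-block terms of the first inequality: one must verify that, as $s$ and $j$ range, the differences $\Vert h_i(s\tau-j)-h_i(s\tau+1-j)\Vert$ sweep out precisely the consecutive differences $\Vert h_i(t)-h_i(t-1)\Vert$ over the full range, and that each difference is counted with the correct multiplicity so that all contributions consolidate into $\alpha^2 B_2\sum_{t=1}^{T-1}\Vert h_i(t)-h_i(t-1)\Vert^2$ with exactly the stated constants rather than loose over-counts. The geometric-series summation and the smoothness argument are routine by comparison.
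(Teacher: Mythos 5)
Your proposal is correct and follows essentially the same route as the paper: squaring the three-term bound \eqref{norm_analyize_form3} via $\Vert a+b+c\Vert^2\leq 3(\Vert a\Vert^2+\Vert b\Vert^2+\Vert c\Vert^2)$, summing with $\sum_{p=1}^{\tau}p^2=\frac{\tau(\tau+1)(2\tau+1)}{6}$ and the geometric series to get $B_1$ and the two pieces of $B_2$, and obtaining the second inequality from orthogonal invariance of the Frobenius norm plus $L$-smoothness. The only cosmetic differences are that you handle the within-block term by Cauchy--Schwarz with the triangular weights (the paper bounds it more crudely in two steps, arriving at the same $3\tau^4$) and that you re-derive inline the geometric-convolution estimate that the paper delegates to its supporting Lemma~\ref{ming_yan_lemma}.
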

\begin{proof}
See Appendix \ref{proof_computation_of_yi}.
\end{proof}
Summing both sides of \eqref{lemma2_equation1} from $i=2$ to $N$ yields
\begin{align}\label{very_important_inequality2}
&\sum^{N}_{i=2}\sum^{T}_{t=1}\Vert y_i(t)\Vert^2\nonumber\\
\leq& NB_1 +{\alpha}^2L^2B_2\sum^{T-1}_{t=1}\sum^{N}_{i=1} \Vert y_i(t)- y_i(t-1)\Vert^2.
\end{align}
Then, we analyze $\Vert y_1(t)- y_1(t-1) \Vert^2$. From \eqref{orthoghnal_matrix}, \eqref{lambda1_rho1_property}, and \eqref{definition_y_h}, we can obtain
\begin{align}\label{y_1_t_overline}
y_1(t)=X(t)v_1=\sqrt{N} \overline{X}(t).
\end{align}
Combining \eqref{y_1_t_overline} and \eqref{average_xt_form} yields
\begin{align}\label{very_important_inequality3}
\Vert y_1(t+1)- y_1(t) \Vert^2\leq N{\alpha}^2\Vert\overline{\nabla f}(X(t))\Vert^2.
\end{align}
Substituting \eqref{very_important_inequality3} into \eqref{very_important_inequality2} leads to
\begin{align}
&\sum^{N}_{i=2}\sum^{T}_{t=1}\Vert y_i(t)\Vert^2\leq {\alpha}^4 L^2 N B_2 \sum^{T-1}_{t=1}\Vert\overline{\nabla f}(X(t-1))\Vert^2\nonumber\\
&\quad+NB_1+{\alpha}^2L^2B_2\sum^{T-1}_{t=1}\sum^{N}_{i=2} \Vert y_i(t)- y_i(t-1)\Vert^2.\label{add_property_inequality_more1}
\end{align}
Further using the inequality $\Vert a+b\Vert^2\leq 2\Vert a\Vert^2+\Vert b\Vert^2$ yields
\begin{align*}
&\sum^{N}_{i=2}\sum^{T}_{t=1}\Vert y_i(t)\Vert^2\nonumber\\
\leq & C_1+{\alpha}^2L^2B_2\sum^{T-1}_{t=1}\Big\{4\sum^{N}_{i=2} \Vert y_i(t)\Vert^2+{\alpha}^2 N\Vert\overline{\nabla f}(X(t))\Vert^2\Big\},
\end{align*}
where 
\begin{align}
C_1={\alpha}^2 L^2 N B_2 \bigr({\alpha}^2\Vert\overline{\nabla f}(X(0))\Vert^2+2A^2_2\bigl)+NB_1.\label{definition_C1}
\end{align}
Therefore, according to \eqref{consensus_analysis_important}, we have
\begin{align}
&\bigl(1-4{\alpha}^2L^2B_2\bigr)\sum^{T}_{t=1}\sum^{N}_{i=1}\Vert \overline{X}(t)-x_i(t)\Vert^2\nonumber\\
\leq&  C_1+{\alpha}^4 L^2 N B_2 \sum^{T-1}_{t=1}\Vert\overline{\nabla f}(X(t))\Vert^2,\label{consensus_property}
\end{align}
which establishes the relationship between consensus error $\sum^{T}_{t=1}\sum^{N}_{i=1}\Vert \overline{X}(t)-x_i(t)\Vert^2$ and {the average gradient} $\sum^{T-1}_{t=1}\Vert\overline{\nabla f}(X(t))\Vert^2$.


\begin{remark}
Lemma \ref{lemma_recursive} represents a central contribution of this paper, as it provides an explicit analytical expression for  $y_i(t)$ based on the recursive formulation in \eqref{convergence_analysis_process4}. To the best of our knowledge, this explicit analytical expression-based convergence analysis approach is entirely new and is pivotal for rigorously establishing both the convergence and consensus guarantees of MILE. Notably, the proof of Lemma \ref{lemma_recursive} introduces a novel analytical framework that, to the best of our knowledge, has not been previously applied in the context of distributed optimization. It begins by reformulating the original recursion into an equivalent periodic system (see \eqref{appendix1_recursive2}), which is then transformed into an equivalent static representation using the lifting technique \cite{lifting} (see \eqref{appendix1_recursive4}). Matrix decomposition theory is subsequently employed to analyze the resulting system. This unique combination of periodic system reformulation, lifting, and matrix decomposition constitutes a new and powerful proof strategy, enabling a precise characterization of MILE’s consensus and convergence properties. 
\end{remark}

\subsection{Optimality Analysis}\label{Convergence_Property_of_DoRecu_full}
Under a stepsize $\alpha$ satisfying $1-4{\alpha}^2L^2B_2>0$, substituting \eqref{consensus_property} into \eqref{important_requiring_consensus} yields
\begin{align}
& \sum^{T}_{t=1}\bigl\{\Vert \nabla f(\overline{X}(t)) \Vert^2+(1-{L{\alpha}}-\frac{{\alpha}^4 L^4 B_2}{B_3})\Vert \overline{\nabla f}(X(t))\Vert^2 \bigr\}\nonumber\\
\leq &\frac{2}{{\alpha}} \bigl\{ f(\overline{X}(1))-f(x^*)\bigr\}+ \frac{C_1 L^2}{B_3 N},\label{convergence_property_last_step}
\end{align}
where $B_3 = 1 - 4\alpha^2 L^2 B_2.$ Thus, we can derive the convergence rate of $\nabla f(\overline{X}(t))$.
\begin{theorem}\label{theorem1}
Under stepsize $0<{\alpha}\leq\frac{1}{\sqrt{5B_2}L}$ (see $B_2$ in Lemma \ref{computation_of_yi}), we have
\begin{align*}
&\frac{1}{T}\sum^{T}_{t=1}\Vert \nabla f(\overline{X}(t)) \Vert^2\nonumber\\
\leq& \frac{2}{\tau{\alpha} K} \bigl\{ f(\overline{X}(1))-f(x^*)\bigr\}+ \frac{ \Vert\overline{\nabla f}(X(0))\Vert^2}{\tau^3 K}\\
&+ \frac{4\Vert X(0)\Vert^2_{F}+4\Vert X(1)\Vert^2_{F}}{\tau^3\alpha^2 K}+\frac{ 2\Vert X(0)\Vert^2_{F}+2\Vert X(1)\Vert^2_{F}}{\alpha^2\rho^2\tau^4 K},
\end{align*}
where $T=\tau+\tau K$ is the total number of iterations (which include $K+1$ communication rounds).
\end{theorem}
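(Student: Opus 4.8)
The plan is to start from the combined bound \eqref{convergence_property_last_step}, discard the nonnegative $\Vert\overline{\nabla f}(X(t))\Vert^2$ contribution to obtain a pure bound on $\sum_{t=1}^T\Vert\nabla f(\overline X(t))\Vert^2$, then normalize by $T$ and control the residual constant $\frac{C_1L^2}{B_3N}$ piece by piece. Everything hinges on extracting three consequences from the stepsize condition $0<\alpha\le\frac{1}{\sqrt{5B_2}L}$: (i) $\alpha^2L^2B_2\le\frac15$, hence $B_3=1-4\alpha^2L^2B_2\ge\frac15>0$, so the consensus bound \eqref{consensus_property} and the derivation of \eqref{convergence_property_last_step} are legitimate; (ii) $\frac{\alpha^4L^4B_2}{B_3}=\frac{(\alpha^2L^2B_2)(\alpha^2L^2)}{B_3}\le\alpha^2L^2$, and since $B_2\ge 3\tau^4\ge 3$ forces $L\alpha\le\frac1{\sqrt{15}}<\frac13$, the coefficient $1-L\alpha-\frac{\alpha^4L^4B_2}{B_3}$ multiplying $\Vert\overline{\nabla f}(X(t))\Vert^2$ is nonnegative; and (iii) the rearranged form $L^2\le\frac1{5B_2\alpha^2}$, which I will use to trade every stray $L^2$ in the constant term for a factor $\frac1{\alpha^2}$ and a $\frac1{B_2}$ that cancels powers of $\tau$. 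Consequence (ii) lets me drop the $\Vert\overline{\nabla f}(X(t))\Vert^2$ term, yielding $\sum_{t=1}^T\Vert\nabla f(\overline X(t))\Vert^2\le\frac2\alpha\big(f(\overline X(1))-f(x^*)\big)+\frac{C_1L^2}{B_3N}$.

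Next I would divide by $T=\tau(K+1)$ and use $T\ge\tau K$, so $\frac1T\le\frac1{\tau K}$; the drift term then gives the leading term $\frac2{\tau\alpha K}\big(f(\overline X(1))-f(x^*)\big)$. The remaining work is to expand $C_1=\alpha^2L^2NB_2\big(\alpha^2\Vert\overline{\nabla f}(X(0))\Vert^2+2A_2^2\big)+NB_1$ from \eqref{definition_C1} and split $\frac{C_1L^2}{B_3NT}$ into three pieces. The $\alpha^2\Vert\overline{\nabla f}(X(0))\Vert^2$ piece becomes $\frac{\alpha^4L^4B_2}{B_3T}\Vert\overline{\nabla f}(X(0))\Vert^2\le\frac{\alpha^2L^2}{T}\Vert\overline{\nabla f}(X(0))\Vert^2\le\frac1{5B_2T}\Vert\overline{\nabla f}(X(0))\Vert^2$, and $B_2\ge 3\tau^4$ together with $T\ge\tau K$ bounds it by $\frac{\Vert\overline{\nabla f}(X(0))\Vert^2}{\tau^3K}$. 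The $2A_2^2$ piece becomes $\frac{2\alpha^2L^4B_2A_2^2}{B_3T}\le\frac{2L^2A_2^2}{T}\le\frac{2A_2^2}{5B_2\alpha^2T}$, again controlled through $B_2\ge 3\tau^4$ and the bound on $A_2^2$ below.

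The delicate piece is $\frac{NB_1L^2}{B_3NT}=\frac{B_1L^2}{B_3T}$. Here I would use $B_3\ge\frac15$ and $L^2\le\frac1{5B_2\alpha^2}$ to reduce it to $\frac{B_1}{B_2\alpha^2T}$, and then bound the ratio $B_1/B_2$ by retaining only the second summand of $B_2$. With $B_2\ge\frac{\tau^2(\tau+1)(2\tau+1)(\rho\tau+\tau-1)^2}{2(1-\sqrt\rho)^2}A_4^2$ and the definition of $B_1$ from Lemma~\ref{computation_of_yi}, the common factors $\tau(\tau+1)(2\tau+1)A_4^2$ cancel, leaving $\frac{B_1}{B_2}\le\frac{A_2^2(1-\sqrt\rho)^2}{\tau(1-\rho)(\rho\tau+\tau-1)^2}$. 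Using $1-\rho=(1-\sqrt\rho)(1+\sqrt\rho)$ gives $\frac{(1-\sqrt\rho)^2}{1-\rho}\le1$, and $\rho\tau+\tau-1\ge\rho\tau$ (valid for $\tau\ge1$, $\rho>0$) gives $(\rho\tau+\tau-1)^2\ge\rho^2\tau^2$, so $\frac{B_1}{B_2}\le\frac{A_2^2}{\rho^2\tau^3}$ and this piece is bounded by $\frac{A_2^2}{\rho^2\tau^4\alpha^2K}$. Finally, since $P$ is orthogonal, $\Vert y_i(t)\Vert\le\Vert Y(t)\Vert_F=\Vert X(t)\Vert_F$, whence $A_2^2\le 2(\Vert X(0)\Vert_F^2+\Vert X(1)\Vert_F^2)$; substituting this into the last two pieces produces exactly the $\frac1{\tau^3\alpha^2K}$ and $\frac1{\rho^2\tau^4\alpha^2K}$ terms of the claimed bound.

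I expect the ratio estimate $B_1/B_2\le A_2^2/(\rho^2\tau^3)$ to be the main obstacle, since extracting the $\rho^2\tau^3$ denominator via the $\frac{(1-\sqrt\rho)^2}{1-\rho}\le1$ and $(\rho\tau+\tau-1)^2\ge\rho^2\tau^2$ steps is the only place where the interplay between $B_1$, $B_2$, and the spectral quantity $\rho$ is genuinely exploited, and it is what forces the specific $\rho^{-2}$ and $\tau$-power structure of the theorem. The remaining steps are mechanical once the stepsize consequences (i)--(iii) and the elementary norm bound $A_2^2\le 2(\Vert X(0)\Vert_F^2+\Vert X(1)\Vert_F^2)$ are in place.
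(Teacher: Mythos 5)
Your proposal is correct and follows essentially the same route as the paper's own proof: drop the nonnegative $\Vert\overline{\nabla f}(X(t))\Vert^2$ term using the stepsize condition, divide by $T\ge\tau K$, use $B_3\ge\alpha^2L^2B_2$ (equivalently $B_2/B_3\le 1/(\alpha^2L^2)$) to absorb the $L$-factors, and control $B_1/B_2$ by retaining the second summand of $B_2$ together with $(1-\sqrt\rho)^2/(1-\rho)\le 1$, $(\rho\tau+\tau-1)^2\ge\rho^2\tau^2$, and $A_2^2\le 2\Vert X(0)\Vert_F^2+2\Vert X(1)\Vert_F^2$. The piece you flagged as delicate is indeed exactly the estimate the paper performs in \eqref{analyze_term_appendixA3}, so no gap remains.
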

\begin{proof}
See Appendix \ref{proof_theorem1}.
\end{proof}
Theorem~\ref{theorem1} establishes the convergence of the state average to a stationary point of the global objective at a rate of \( O(1/T) \) in general nonconvex settings. In fully decentralized optimization, this result is insufficient because individual agents lack direct access to the state average. To address this, we now combine the consensus properties obtained in \eqref{consensus_property} with Theorem \ref{theorem1} to establish the convergence of all local variables to a common stationary point.


\begin{theorem}\label{corollary1}
Under stepsize $0<{\alpha}\leq\frac{1}{\sqrt{5B_2}L}$  (see $B_2$ in Lemma \ref{computation_of_yi}), we have
\begin{align*}
&\frac{1}{NT}\sum^{N}_{i=1}\sum^{T}_{t=1}\Vert \overline{X}(t)-x_i(t)\Vert^2\nonumber\\
\leq& \frac{4{\alpha}}{\tau K} \bigl\{ f(\overline{X}(1))-f(x^*)\bigr\}+ \frac{ 3\alpha^2\Vert\overline{\nabla f}(X(0))\Vert^2}{\tau^3 K}\\
&+ \frac{12\Vert X(0)\Vert^2_{F}+12\Vert X(1)\Vert^2_{F}}{\tau^3 K}+\frac{ 6\Vert X(0)\Vert^2_{F}+6\Vert X(1)\Vert^2_{F}}{\rho^2\tau^4 K},
\end{align*}
where $T=\tau+\tau K$ is the total number of iterations (which include $K+1$ communication rounds).
\end{theorem}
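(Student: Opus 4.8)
The plan is to bound the aggregate consensus error by combining the two master inequalities already in hand: the consensus bound \eqref{consensus_property} and the descent inequality \eqref{convergence_property_last_step}. First I would divide both sides of \eqref{consensus_property} by $B_3 N T$ (legitimate since the stepsize condition guarantees $B_3 = 1-4\alpha^2 L^2 B_2 > 0$), obtaining
\[
\frac{1}{NT}\sum_{t=1}^{T}\sum_{i=1}^{N}\Vert\overline{X}(t)-x_i(t)\Vert^2
\le \frac{C_1}{B_3 N T}+\frac{\alpha^4 L^2 B_2}{B_3 T}\sum_{t=1}^{T-1}\Vert\overline{\nabla f}(X(t))\Vert^2 .
\]
This reduces the task to (a) bounding the average-gradient sum $\sum_t\Vert\overline{\nabla f}(X(t))\Vert^2$ and (b) unpacking the constant $C_1$ from \eqref{definition_C1}.

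For step (a) I would return to \eqref{convergence_property_last_step}, drop the nonnegative term $\Vert\nabla f(\overline{X}(t))\Vert^2$, and lower-bound the coefficient $1-L\alpha-\alpha^4 L^4 B_2/B_3$. The stepsize condition $0<\alpha\le\frac{1}{\sqrt{5B_2}L}$ gives $\alpha^2 L^2 B_2\le\frac15$, hence $B_3\ge\frac15$, and together with $B_2\ge 3\tau^4$ (so $\alpha^2 L^2\le\frac1{15}$) it makes that coefficient bounded below by a positive constant (indeed at least $\frac12$). This yields $\sum_{t}\Vert\overline{\nabla f}(X(t))\Vert^2\le 2\bigl(\tfrac2{\alpha}(f(\overline{X}(1))-f(x^*))+\tfrac{C_1 L^2}{B_3 N}\bigr)$. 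Because $\alpha^4 L^2 B_2/B_3\le\alpha^2$, substituting this back shows the gradient term contributes the leading $\tfrac{4\alpha}{\tau K}(f(\overline{X}(1))-f(x^*))$ piece plus $\alpha^2$-scaled copies of the $C_1$ contributions.

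For step (b), I would use that $P$ is orthogonal and $Y(t)=X(t)P$ gives $\sum_i\Vert y_i(t)\Vert^2=\Vert X(t)\Vert_F^2$, hence $A_2^2\le 2(\Vert X(1)\Vert_F^2+\Vert X(0)\Vert_F^2)$, and then insert the definitions of $C_1$ in \eqref{definition_C1}, of $B_1,B_2$ in Lemma~\ref{computation_of_yi}, and of $A_4$ in \eqref{many_definition_A}. The decisive device, mirroring the proof of Theorem~\ref{theorem1}, is the stepsize identity $\alpha^2 L^2\le\frac{1}{5B_2}$ with $B_2=3\tau^4+\cdots$, which converts residual $L^2$ factors into multiples of $1/(\alpha^2\tau^4)$; combined with the explicit $\tau$- and $\rho$-dependence of $B_1,B_2,A_4$ through the quantities $1-\rho$, $1-\sqrt{\rho}$, $\sin(\theta_i)$ and the admissible range $\frac{\tau-1}{\tau+3}<\rho<1$, this is what produces the stated denominators $\tau^3$ and $\rho^2\tau^4$. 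I expect the consensus terms to appear \emph{without} the $1/\alpha^2$ factor present in Theorem~\ref{theorem1}, precisely because the dominant initialization piece here is the $L^2$-free quantity $C_1/(B_3 N)$ rather than $C_1 L^2/(B_3 N)$. Collecting the four contributions and using $T=\tau(K+1)\ge\tau K$ gives the claimed bound.

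The reduction and the gradient-sum bound are routine once \eqref{consensus_property} and \eqref{convergence_property_last_step} are available. The main obstacle is the constant bookkeeping in step (b): pinning down the exact powers of $\tau$ and $\rho$ and the numerical constants requires careful, simultaneous estimation of $B_1$, $B_2$, $A_4$, and the trigonometric quantity $\sqrt{4\rho_i-[\rho_i(\tau+1)+1-\tau]^2}$ across all eigenvalues $\rho_i$ with $i\ge 2$, and leans crucially on the stepsize-enabled $L^2\mapsto 1/(\alpha^2\tau^4)$ substitution together with the constraint on $\rho$. Getting these estimates to line up with the clean stated form, rather than merely establishing the $O(1/K)$ rate, is where the delicacy lies.
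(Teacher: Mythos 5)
Your proposal is correct and follows essentially the same route as the paper's proof: divide \eqref{consensus_property} by $B_3NT$, use $B_2/B_3\le 1/(\alpha^2L^2)$ to reduce the gradient coefficient to $\alpha^2$, bound $\sum_t\Vert\overline{\nabla f}(X(t))\Vert^2$ from \eqref{convergence_property_last_step} via the coefficient lower bound $1/2$, and then unpack $C_1$, $B_1$, and $A_2^2\le 2\Vert X(0)\Vert_F^2+2\Vert X(1)\Vert_F^2$ exactly as in Appendix C. Your observation that the initialization terms lose the $1/\alpha^2$ factor because the dominant contribution is $C_1/(B_3N)$ rather than $C_1L^2/(B_3N)$ also matches the paper's bookkeeping.
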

\begin{proof}
See Appendix \ref{proof_corollary1}
\end{proof}
Theorems \ref{theorem1} and \ref{corollary1} establish the accurate convergence of MILE under multiple local updates in the nonconvex setting. This stands in sharp contrast to prior results in \cite{LED,taolin1,LOCAL1_AVERAGING}, which only guarantee convergence of the state average to a common stationary point. This distinction makes our results more applicable to fully decentralized settings, where the state average is not directly accessible to individual agents.


\subsection{The Importance of Weight Parameter}\label{importance_weight_parameter}

In this subsection, we explain why the weight parameter \( \xi \in (0, \frac{2}{\tau+3}) \) in Algorithm~\ref{algorithm_recur} is critical for ensuring the convergence of {MILE}. For clarity, we restate \eqref{appendix1_recursive4} in the proof of Lemma~\ref{lemma_recursive} below:
\begin{align*}
x(k\tau+1) = F_1^k x(1) + \sum_{s=0}^{k-1} F_1^{k-1-s} G_1 u_1(s).
\end{align*}
Based on the definition \eqref{definition_F1} of $F_1$ and properties of eigenvalue decomposition,  \( F_1^s \) can be expressed as
\begin{align}\label{matrix_decomposition}
F_1^s &= \left[                 
  \begin{array}{cc}   
    \rho(\tau+1) & -\rho\tau \\ 
    \tau & 1 - \tau \\ 
  \end{array}
\right]^s \nonumber \\
&= [v_1, v_2] 
\begin{bmatrix}
\mu_1^s & 0 \\
0 & \mu_2^s
\end{bmatrix}
[v_1, v_2]^{-1},
\end{align}
where \( \{\mu_1, \mu_2\} \) are the eigenvalues of \( F_1 \) (see \eqref{eigenvalue_lambda}), and \( \{v_1, v_2\} \) are the corresponding eigenvectors (see \eqref{v1v2_definition1}).

To avoid divergence of \( x(k\tau+1) \), we require the eigenvalues of \( F_1 \) to satisfy
\begin{align}\label{eigenvalue_property_lambda}
|\mu_1| < 1 \quad  \text{and} \quad |\mu_2| < 1.
\end{align}
To derive a sufficient condition on \( \rho \) that ensures~\eqref{eigenvalue_property_lambda}, we note that \( \mu_1 \) and \( \mu_2 \) are the roots of the following quadratic equation
\begin{align*}
\mu^2 - [\rho(\tau+1) + (1 - \tau)]\mu + \rho = 0.
\end{align*}
In the case of complex (non-real) roots, Vieta's formula implies
\begin{align*}
\mu_1 \mu_2 = \rho \quad  \text{and} \quad |\mu_1| = |\mu_2| = \sqrt{\rho},
\end{align*}
which guarantees the condition in \eqref{eigenvalue_property_lambda} since $\rho<1$ holds under the condition of Lemma \ref{lemma_recursive}. It is easy to verify that a sufficient condition for the roots to have non-zero imaginary parts is
\begin{align*}
\left[\rho(\tau+1) + (1 - \tau)\right]^2 - 4\rho < 0,
\end{align*}
which holds under $\frac{\tau - 1}{\tau + 3} < \rho < 1.$ Therefore,  from~\eqref{definition_rhoi_from_lambdai}, it is sufficient to require
\begin{align*}
\frac{\tau - 1}{\tau + 3} < 1 - \xi + \xi \lambda_i < 1
\end{align*}
for \( 2 \leq i \leq N \) to ensure that the eigenvalues of \( F_1 \) satisfy~\eqref{eigenvalue_property_lambda}. From Assumption~\ref{mixing_matrix}, \( 1 > \lambda_2 \geq \dots \geq \lambda_N > -1 \) holds. 
As a result, the following condition 
\begin{align*}
0 < \xi < \frac{2}{\tau + 3}
\end{align*}
is sufficient to ensure that the roots \( \mu_1 \) and \( \mu_2 \) satisfy \( |\mu_1| < 1 \) and \( |\mu_2| < 1 \).

Assume, to the contrary, that the weight parameter is not required—specifically, by setting \( \xi = 1 \), the matrix form of {MILE} becomes
\begin{align}\label{without_weight_parameter}
X(t+1) =\; & \big\{ 2X(t) - X(t-1) - \alpha \nabla f(X(t)) \nonumber \\
& + \alpha \nabla f(X(t-1)) \big\} W(t),
\end{align}
where \( W(t) = W \) for \( t = k\tau \), and \( W(t) = \mathbf{I}_N \) otherwise. It has been shown that the update rule in~\eqref{without_weight_parameter} is not guaranteed to converge and often diverges (see Remark~2 in~\cite{LED}). This further underscores the significance of our novel design in contrast to existing algorithms.

\section{Convergence Analysis with Stochastic Gradients}\label{mini_batch}
In Section~\ref{full_batch}, we established the convergence of {MILE} under exact gradients. In this section, we extend the convergence analysis to a more practical scenario where only stochastic gradient estimates are available. To be specific, we consider the local objective function \( f_i(x) \) defined as
\begin{align}
f_i(x) = \mathbb{E}_{\xi_i \sim D_i}[f_i(x, \xi_i)],
\end{align}
where \( \xi_i \) denotes a stochastic data sample drawn from a local data distribution \( D_i \), which may differ across agents. Consequently, agent \( i \) can only access a stochastic estimate \( \nabla f_i(x, \xi_i) \) of the true gradient \( \nabla f_i(x) \) for any \( x \in \mathbb{R}^n \). We adopt the following standard assumption on the stochastic gradient, which is widely used in the machine-learning literature (see, e.g., \cite{pmlr-v119-karimireddy20a,LED,Ming_Yan2,Shi_Pu_Tracking}):
\begin{assumption}\label{stochastic_gradient}
The stochastic gradient $\nabla f_i(x, \xi_i)$ is an unbiased estimate of the accurate gradient $\nabla f_i(x)$, with its variance bounded by $\sigma^2$. Specifically, we have 
\begin{align}
&\mathbb{E}_{\xi_i \sim D_i}[\nabla f_i(x,\xi_i)]=\nabla f_i(x),\label{noise1_assumption} \\
&\mathbb{E}_{\xi_i \sim D_i}[\Vert \nabla f_i(x,\xi_i)-\nabla f_i(x)\Vert^2]\leq \sigma^2,\label{noise2_assumption}
\end{align}
for any $x \in \mathbb{R}^n$ and $i \in \mathcal{V}$.
\end{assumption}

In this stochastic setting, the exact gradients \( \nabla f_i(x_i(t)) \) and \( \nabla f_i(x_i(t-1)) \) in updates (\ref{initial_value_design1}),~(\ref{equation_state_new_condition}), and~(\ref{equation_state_new_condition2}) of Algorithm~\ref{algorithm_recur} should be replaced with their stochastic counterparts \( \nabla f_i(x_i(t), \xi_i(t)) \) and \( \nabla f_i(x_i(t-1), \xi_i(t-1)) \), respectively, where \( \xi_i(t) \sim D_i \) denotes a sample drawn from the local data distribution at each iteration. We now present the convergence properties of {MILE} in this case.
\begin{theorem}\label{theorem2}
Under stepsize $0<\alpha\leq\frac{1}{\sqrt{13B_2}L}$ (see $B_2$ in Lemma \ref{computation_of_yi}), we have
\begin{align*}
& \frac{1}{T}\sum^{T}_{t=1}\mathbb{E}[\Vert \nabla f(\overline{X}(t))\Vert^2]\nonumber\\
\leq & \frac{2 \mathbb{E}[f(\overline{X}(1))]-2f(x^*)}{{\alpha} \tau K}+\frac{ 2\mathbb{E}[\Vert X(1)\Vert^2_{F}]+2\mathbb{E}[\Vert X(0)\Vert_F^2]}{\alpha^2\rho^2\tau^4K}\\
&+\frac{12\mathbb{E}[\Vert X(1)\Vert_F^2]+12\mathbb{E}[\Vert X(0)\Vert_F^2]}{\tau^3\alpha^2 K} +\frac{6  \mathbb{E}[\Vert\overline{\nabla f}(X(0))\Vert^2]}{\tau^3 K}\nonumber\\
&+ {L{\alpha}}\sigma^2+{156B_2}{\alpha^2}L^2\sigma^2,
\end{align*}
where $T=\tau+\tau K$ is the total number of iterations (which include $K+1$ communication rounds).
\end{theorem}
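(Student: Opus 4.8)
The plan is to mirror the deterministic argument of Theorems~\ref{theorem1} and~\ref{corollary1} while carrying expectations through every inequality and tracking the two new noise contributions---one arising in the descent step and one in the consensus recursion. Because the mixing matrix still preserves the network average, the averaged dynamics \eqref{average_xt_form} continue to hold verbatim with the exact gradient replaced by its stochastic counterpart $\overline{\nabla f}(X(t),\xi(t))=\frac{1}{N}\sum_{i=1}^N \nabla f_i(x_i(t),\xi_i(t))$. I would therefore begin from this stochastic averaged recursion and reproduce the $L$-smoothness descent step that led to \eqref{convergence_analysis_process2}.

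Taking the conditional expectation given the past, the unbiasedness in \eqref{noise1_assumption} collapses the inner-product term $\langle\nabla f(\overline{X}(t)),\overline{\nabla f}(X(t),\xi(t))\rangle$ to its deterministic value $\langle\nabla f(\overline{X}(t)),\overline{\nabla f}(X(t))\rangle$, while the quadratic term splits as $\mathbb{E}[\Vert\overline{\nabla f}(X(t),\xi(t))\Vert^2]=\Vert\overline{\nabla f}(X(t))\Vert^2+\mathbb{E}[\Vert\overline{\nabla f}(X(t),\xi(t))-\overline{\nabla f}(X(t))\Vert^2]$. Bounding the variance through Jensen's inequality together with \eqref{noise2_assumption} yields an additive $\sigma^2$ term, so the stochastic analogue of \eqref{convergence_analysis_process2} carries one extra summand of order $L\alpha^2\sigma^2$. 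Summing from $t=1$ to $T$ and rescaling by $\frac{2}{\alpha}$, exactly as in \eqref{important_requiring_consensus}, this produces the $L\alpha\sigma^2$ term in the final bound after the concluding division by $T$.

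The main obstacle is re-establishing the consensus estimate \eqref{consensus_property} in the stochastic regime. The closed-form expressions of Lemma~\ref{lemma_recursive} remain valid, since they are purely algebraic in the inputs, and the bounds in Lemma~\ref{computation_of_yi} are driven by the gradient-difference quantities $\Vert h_i(t)-h_i(t-1)\Vert^2$, which must now be re-controlled in expectation. I would decompose each stochastic gradient difference into a deterministic part, bounded via $L$-smoothness by $\Vert y_i(t)-y_i(t-1)\Vert$, plus two zero-mean noise increments whose second moments are each at most $\sigma^2$ by \eqref{noise2_assumption}. Applying $\Vert a+b+c\Vert^2\leq 3(\Vert a\Vert^2+\Vert b\Vert^2+\Vert c\Vert^2)$ and summing over agents and iterations, the second inequality of Lemma~\ref{computation_of_yi} then acquires an additive $N\sigma^2$-type term that propagates through the consensus recursion and scales like $\alpha^2L^2B_2\sigma^2$. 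The delicate point is the conditioning: since $x_i(t)$ already depends on the noise $\xi_i(t-1)$, the two noise increments must be handled with the appropriate filtration so that their cross terms vanish in expectation, and it is this extra slack that forces the tighter stepsize condition $0<\alpha\leq\frac{1}{\sqrt{13B_2}L}$ in place of the deterministic $\frac{1}{\sqrt{5B_2}L}$.

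Finally, I would substitute the expected consensus bound into the expected descent inequality, in direct analogy with the passage from \eqref{consensus_property} and \eqref{important_requiring_consensus} to \eqref{convergence_property_last_step}. The stepsize choice $0<\alpha\leq\frac{1}{\sqrt{13B_2}L}$ keeps the coefficient of $\mathbb{E}[\Vert\overline{\nabla f}(X(t))\Vert^2]$ nonnegative after absorbing the new variance-induced cross term, so that this quantity may be discarded; dividing by $T=\tau+\tau K$ and collecting the two noise contributions then yields the stated bound with its $L\alpha\sigma^2$ and $156B_2\alpha^2L^2\sigma^2$ terms, together with the transient terms inherited essentially unchanged from Theorem~\ref{theorem1}.
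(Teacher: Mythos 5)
Your proposal matches the paper's proof essentially step for step: the stochastic averaged recursion $\overline{X}(t+1)=\overline{X}(t)-\alpha\overline{G}(t)$, the descent inequality in expectation with unbiasedness collapsing the cross term and the variance contributing the $L\alpha\sigma^2$ piece, the three-way decomposition of $h_i(t)-h_i(t-1)$ with $\Vert a+b+c\Vert^2\leq 3(\Vert a\Vert^2+\Vert b\Vert^2+\Vert c\Vert^2)$ in the stochastic consensus lemma (yielding the $\alpha^2 L^2 B_2\sigma^2$-type terms and the tightened stepsize $\alpha\leq\frac{1}{\sqrt{13B_2}L}$), and the final substitution and division by $T$. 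The only cosmetic difference is that the paper's consensus argument does not need the noise cross terms to vanish (the crude $3$-term inequality suffices), so the filtration caveat you raise is unnecessary but harmless.
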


\begin{proof}
See Appendix \ref{proof_theorem2}.
\end{proof}
Theorem~\ref{theorem2} shows that due to the presence of noise in the gradients,  \( \frac{1}{T} \sum_{t=1}^{T} \mathbb{E} \Vert \nabla f(\overline{X}(t)) \Vert^2 \) converges to within a bounded region around zero, whose size is determined by the stepsize \( \alpha \), the number of local updates \( \tau \), smoothness constant \( L \), and the variance \( \sigma^2 \) of the stochastic gradients. We now present the corresponding consensus analysis of {MILE} under stochastic gradients.
\begin{theorem}\label{corollary2}
Under stepsize $0<\alpha\leq\frac{1}{\sqrt{13B_2}L}$ (see $B_2$ in Lemma \ref{computation_of_yi}), we have
\begin{align*}
&\frac{1}{NT}\sum^{N}_{i=1}\sum^{T}_{t=1}\mathbb{E}[\Vert \overline{X}(t)-x_i(t)\Vert^2]\\
\leq & \frac{24{\alpha} \{\mathbb{E}[f(\overline{X}(1))]-f(x^*)\}}{ \tau K}+\frac{78 \alpha^2 \mathbb{E}[\Vert\overline{\nabla f}(X(0))\Vert^2]}{\tau^3 K}\\
&+\frac{ 26\mathbb{E}[\Vert X(1)\Vert^2_{F}]+26\mathbb{E}[\Vert X(0)\Vert_F^2]}{\rho^2\tau^4K}+{2028B_2}{\alpha^2}\sigma^2\\
&+\frac{156\mathbb{E}[\Vert X(1)\Vert_F^2]+156\mathbb{E}[\Vert X(0)\Vert_F^2]}{\tau^3 K} + {6{\alpha}^2}\sigma^2,
\end{align*}
where $T=\tau+\tau K$ is the total number of iterations (which include $K+1$ communication rounds).
\end{theorem}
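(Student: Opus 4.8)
The plan is to establish Theorem~\ref{corollary2} by mirroring the exact-gradient consensus argument behind Theorem~\ref{corollary1}, but carrying the stochastic-gradient noise through each step and controlling its contribution via Assumption~\ref{stochastic_gradient}. At a high level, I would (i) re-derive the consensus inequality \eqref{consensus_property} in expectation, tracking the extra noise terms; (ii) invoke the stochastic descent analysis underlying Theorem~\ref{theorem2} to bound the accumulated average-gradient sum $\sum_t \mathbb{E}[\|\overline{\nabla f}(X(t))\|^2]$; and (iii) combine the two and normalize by $NT$ with $T=\tau+\tau K$.

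First I would re-examine the consensus chain \eqref{very_important_inequality2}--\eqref{consensus_property} with every exact gradient replaced by its stochastic counterpart. The crucial object is $\mathbb{E}[\|h_i(t)-h_i(t-1)\|^2]$ entering Lemma~\ref{computation_of_yi}. Decomposing each stochastic gradient difference as the true gradient difference plus the two per-iteration noise terms, and using the conditional unbiasedness \eqref{noise1_assumption} so that the cross terms vanish under $\mathbb{E}[\cdot]$ together with the variance bound \eqref{noise2_assumption}, the noise contributes an additive $O(\sigma^2)$ per iteration. Combined with the smoothness relation $\sum_{i=2}^N\|h_i(t-1)-h_i(t)\|^2\leq L^2\sum_{i=1}^N\|y_i(t)-y_i(t-1)\|^2$ from Lemma~\ref{computation_of_yi}, this yields a stochastic analog of \eqref{consensus_property},
\begin{align*}
(1-c_1\alpha^2 L^2 B_2)\,\mathbb{E}\Big[\textstyle\sum_{t,i}\|\overline{X}(t)-x_i(t)\|^2\Big]
&\leq C_1' + c_2\alpha^4 L^2 N B_2\,\textstyle\sum_t\mathbb{E}[\|\overline{\nabla f}(X(t))\|^2] \\
&\quad + c_3 B_2\alpha^2\sigma^2 T,
\end{align*}
where $C_1'$ is the stochastic analog of $C_1$ in \eqref{definition_C1} and $c_1,c_2,c_3$ are absolute constants. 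The tighter stepsize $\alpha\leq\frac{1}{\sqrt{13B_2}L}$ (versus $\frac{1}{\sqrt{5B_2}L}$ in the exact case) is exactly what keeps the leading coefficient $1-c_1\alpha^2 L^2 B_2$ bounded away from zero once the extra noise-induced terms are absorbed.

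Next I would bound $\sum_t\mathbb{E}[\|\overline{\nabla f}(X(t))\|^2]$ using the expected-descent inequality that proves Theorem~\ref{theorem2} (the stochastic analog of \eqref{convergence_property_last_step}): this inequality retains a positive coefficient on $\|\overline{\nabla f}(X(t))\|^2$ under the stated stepsize, so summing it bounds $\sum_t\mathbb{E}[\|\overline{\nabla f}(X(t))\|^2]$ by the initial gap $\mathbb{E}[f(\overline{X}(1))]-f(x^*)$, the initialization terms $\mathbb{E}[\|X(0)\|_F^2]$, $\mathbb{E}[\|X(1)\|_F^2]$, $\mathbb{E}[\|\overline{\nabla f}(X(0))\|^2]$, and an irreducible $\sigma^2$ floor. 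Substituting this into the Step~1 inequality and dividing by $NT$ collapses the transient terms into the stated $1/(\tau K)$, $1/(\tau^3 K)$, and $1/(\tau^4 K)$ rates, while the accumulated noise reduces to the floors $2028B_2\alpha^2\sigma^2$ and $6\alpha^2\sigma^2$.

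The main obstacle is Step~1: faithfully propagating the noise through the lifting-based bounds of Lemma~\ref{computation_of_yi} and through the coupling \eqref{very_important_inequality3} between $\|y_1(t)-y_1(t-1)\|^2$ and $\|\overline{\nabla f}(X(t))\|^2$. The subtlety is that the averaging identity \eqref{average_xt_form} itself acquires a noise term in the stochastic setting, so $\|y_1(t+1)-y_1(t)\|^2$ no longer equals $N\alpha^2\|\overline{\nabla f}(X(t))\|^2$ but picks up an extra variance contribution of order $\alpha^2\sigma^2$. Tracking this term, together with the martingale-difference structure of the noise that lets the cross terms drop under $\mathbb{E}[\cdot]$, is where the bulk of the bookkeeping---and the precise constants in the final bound---originates.
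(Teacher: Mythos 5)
Your proposal matches the paper's proof in both structure and substance: the paper proves a stochastic consensus lemma (Lemma~\ref{consensus_stochastic}) that redoes the chain \eqref{very_important_inequality2}--\eqref{consensus_property} in expectation with an added $O(B_2\alpha^2\sigma^2 T)$ noise term and an extra $\alpha^2\sigma^2$ contribution in the $y_1$ increment, then bounds $\sum_t\mathbb{E}[\Vert\overline{\nabla f}(X(t))\Vert^2]$ from the stochastic descent inequality behind Theorem~\ref{theorem2} and substitutes back, exactly as you outline. One small caveat: the paper does not rely on cross terms vanishing in $\mathbb{E}[\Vert h_i(t)-h_i(t-1)\Vert^2]$ (which would be delicate since $x_i(t)$ depends on $\xi_i(t-1)$) but instead uses the crude bound $\Vert a+b+c\Vert^2\leq 3\Vert a\Vert^2+3\Vert b\Vert^2+3\Vert c\Vert^2$; adopting that step makes your argument fully rigorous without changing anything else.
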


\begin{proof}
See Appendix \ref{proof_corollary2}.
\end{proof}

\section{Comparisons with Existing Works}\label{comparison_previous}

In this section, we systematically compare MILE with existing counterpart algorithms with multiple local updates.

\begin{table*}[htpb]
\caption{Comparison between MILE and Existing Algorithms with Multiple Local Updates}
\label{sample-table2}
\vskip -0.6in
\begin{center}
\begin{threeparttable}
\begin{small}
\begin{sc}
\begin{tabular}{c|c|c|c|c}
\toprule
 \multirow{2}*{Algorithm}  & Consensus & Convergence Analysis & Communication   & Memory    \\
  & Guarantee?  & in Nonconvex Settings? & Overhead\tnote{1} &   Overhead\tnote{2}\\
\midrule
   {MILE} (\emph{Ours}) & \(\raisebox{0.6ex}{\scalebox{0.7}{\(\sqrt{}\)}}\) & \(\raisebox{0.6ex}{\scalebox{0.7}{\(\sqrt{}\)}}\) &  1 & 2  \\
 LED \cite{LED}  & $\times$  & \(\raisebox{0.6ex}{\scalebox{0.7}{\(\sqrt{}\)}}\)  & 1  & 3   \\
 K-GT \cite{taolin1}  & $\times$ & \(\raisebox{0.6ex}{\scalebox{0.7}{\(\sqrt{}\)}}\) & 2  & 3   \\
Dec-FedTrack \cite{LOCAL1_AVERAGING}   &  \(\times\) & $\times$ &   2  &  3 \\
GTA \cite{LOCAL_SMALL_STEPSIZE} & \(\raisebox{0.6ex}{\scalebox{0.7}{\(\sqrt{}\)}}\) &  $\times$ &2  & 3  \\
 DIGing \cite{WeiShi3} & \(\raisebox{0.6ex}{\scalebox{0.7}{\(\sqrt{}\)}}\) &  $\times$ & 2  & 3  \\
 PPADCO \cite{Nedich_time_varying2}& \(\raisebox{0.6ex}{\scalebox{0.7}{\(\sqrt{}\)}}\)  & $\times$  & 2  & 3  \\
MIND \cite{Mind_submitted} & \(\raisebox{0.6ex}{\scalebox{0.7}{\(\sqrt{}\)}}\) & $\times$ &  1 & 2  \\
\bottomrule
\end{tabular}
\end{sc} 
\begin{tablenotes}    
        \footnotesize               
        \item[{1}] {To quantify communication overhead, we measure the number  of $n$-dimensional variables shared between two interacting agents during $\tau$ local updates.}
        \item[{2}] {To quantify memory overhead, we measure the number of $n$-dimensional variables that must be stored, where $n$ is the dimension of the optimization variable. It is worth noting that we do not consider gradients since they can be computed directly using the optimization variable.}
      \end{tablenotes} 
\end{small}
\end{threeparttable}
\end{center}
\vskip -0.1in
\end{table*}

\subsection{Consensus Guarantee in Nonconvex Settings}
Several recent works~\cite{LED,taolin1,LOCAL1_AVERAGING} have studied decentralized optimization with multiple local updates. However, these methods only guarantee the optimality of the average value of all agents' states. This is not sufficient in fully decentralized settings, where agents cannot access the average and must rely on their own local states. Without consensus, local solutions may differ, leading to inconsistent or suboptimal performance across the network. Some recent methods do ensure consensus \cite{10886043,LOCAL_SMALL_STEPSIZE,WeiShi3,Nedich_time_varying2}, but they rely on the assumption of \emph{strong convexity}, which limits their practical applicability—particularly for the nonconvex problems often encountered in machine learning. In contrast, our proposed algorithm, {MILE}, ensures both optimality and consensus under multiple local steps in general \emph{nonconvex} settings.   Table~\ref{sample-table2} presents a detailed comparison between {MILE} and existing algorithms in terms of their consensus guarantees and whether they require the strong convexity assumption.

\begin{remark}

It is worth noting that decentralized optimization with multiple local updates can be viewed as decentralized optimization under intermittent communications. This setting bears similarities to existing work on time-varying networks (e.g., \cite{WeiShi3,Nedich_time_varying2}). However, all existing distributed optimization results for time-varying networks—including our prior work \cite{Mind_submitted}—rely on the assumption of strong convexity. In contrast, this paper addresses the more general and challenging nonconvex setting.

\end{remark}

\subsection{Efficient Communication}
A key advantage of {MILE} is its communication-efficient design. Existing algorithms, such as K-GT~\cite{taolin1}, GTA \cite{LOCAL_SMALL_STEPSIZE}, and DiGing~\cite{WeiShi3}, typically require each agent to transmit two \( n \)-dimensional vectors per communication round: the local optimization variable and an auxiliary variable (e.g., a gradient-tracking or error-correction term). This results in higher communication overhead, particularly when the dimension $n$ is large, as in deep learning applications. In contrast, {MILE} only requires sharing one \( n \)-dimensional vector between two interacting agents per communication round.  As a result, {MILE} effectively reduces per-round communication costs by half compared to existing methods. Table~\ref{sample-table2} presents a detailed comparison of {MILE} and existing algorithms in terms of communication overhead.

\subsection{Efficient Memory}
In addition to communication efficiency, {MILE} also offers significant memory savings. Algorithms such as K-GT~\cite{taolin1}, LED~\cite{LED}, and DIGing~\cite{WeiShi3} require each agent to store at least three \( n \)-dimensional variables—typically including the optimization variable, a gradient-tracking variable, and an auxiliary variable for drift correction. This storage requirement can be demanding, especially in high-dimensional or resource-constrained environments. In contrast, {MILE} eliminates the need for auxiliary variables by employing a recursive structure that integrates both current and historical gradient information into the local updates. As a result, each agent only needs to store two variable: the current and previous optimization variables. This significantly reduces memory requirements without compromising convergence accuracy. Table~\ref{sample-table2} presents a detailed comparison of {MILE} and existing algorithms in terms of memory overhead.

\begin{remark}
MILE employs a fixed number $\tau$ of local updates between two consecutive communication rounds, in contrast to Scaffnew~\cite{scaffnew} and MG-Skip~\cite{luyao_guo}, which utilize a random number of local updates. This key difference—\emph{deterministic} versus \emph{randomized} communication skipping—not only leads to fundamentally different algorithmic behaviors but also requires distinct analytical techniques. Furthermore, \cite{scaffnew, luyao_guo} focus exclusively on the \emph{strongly convex} setting and only establish convergence \emph{in expectation}. In comparison, MILE is designed for the more general and complicated \emph{nonconvex} optimization setting and is equipped with an \emph{exact convergence} guarantee. MILE's more general applicability and stronger theoretical guarantee make it better suited for practical decentralized optimization and learning scenarios.
\end{remark}

\section{Numerical Experiments}\label{numerical_experiments}

{We evaluate our proposed algorithm by training a CNN on $10$ agents using the benchmark datasets MINST \cite{lecun1998mnist} and CIFAR-10 \cite{krizhevsky2009learning}. The CNN used for the MNIST dataset consists of two convolutional layers. The first convolutional layer has $16$ filters with a kernel size of $5$ and padding of $2$, followed by a $2\times 2$ max-pooling layer. The second convolutional layer has $32$ filters with the same kernel size and padding, also followed by $2\times 2$ max-pooling. The resulting feature maps are flattened and passed to a fully connected layer that outputs $10$ logits corresponding to the digit classes. The CNN for CIFAR-10 consists of three convolutional layers with $32$, $64$, and $128$ filters, respectively, each followed by a max-pooling layer. After the final convolutional and pooling layers, the network includes a fully connected layer with $256$ units, a dropout layer with a rate of $0.25$ for regularization, and a final dense output layer with $10$ units that produces the class logits. In our experiments, we compare the proposed algorithm against existing decentralized optimization methods allowing multiple local updates, including  LED \cite{LED},  K-GT \cite{taolin1}, and  DIGing \cite{WeiShi3}. Following \cite{hsu2019measuring} and \cite{Kim_Adpative}, we generate heterogeneous local datasets for $10$ agents under a ring topology, with the heterogeneity parameter set to $1$, which represents a high level of heterogeneity. The number of local updates was set to $\tau=10$ for both cases. }

\begin{figure}[H]
    \centering
    \begin{minipage}[t]{0.69\linewidth}
        \centering
        \includegraphics[width=\textwidth]{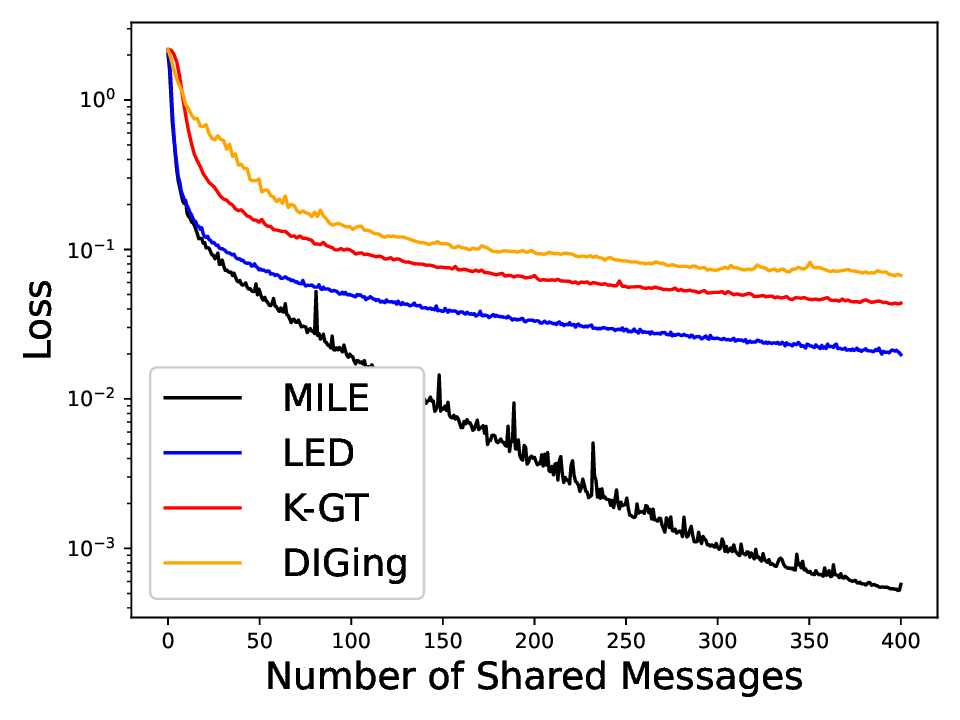}
        \centerline{\small{$\quad\ $ (a)}}
    \end{minipage}  
    \begin{minipage}[t]{0.71\linewidth}
        \centering
        \includegraphics[width=\textwidth]{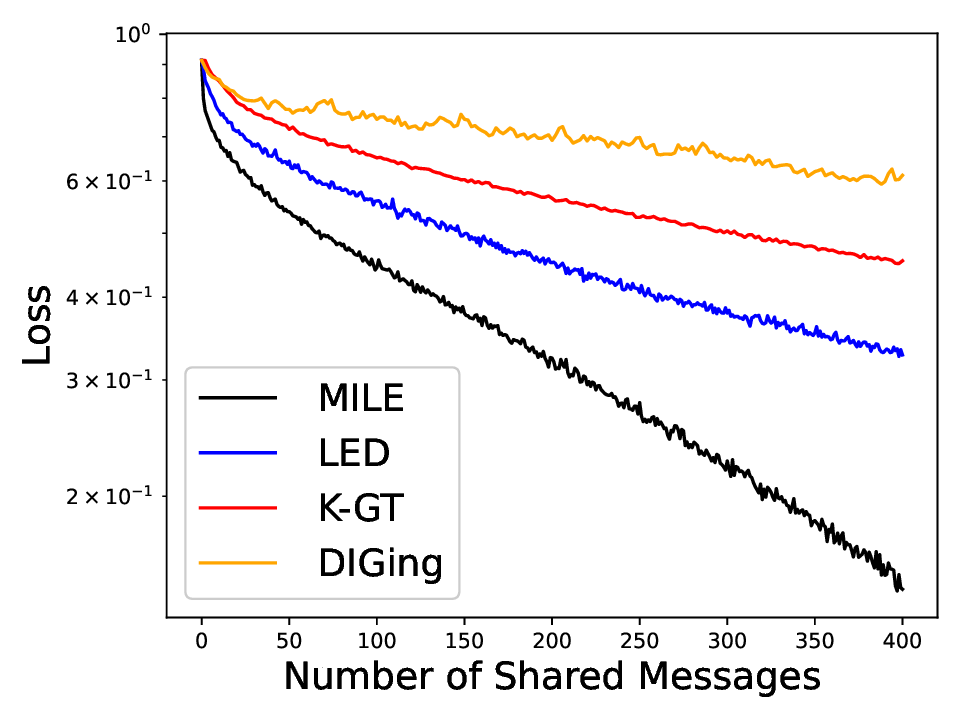}
        \centerline{\small{$\quad\ $ (b)}}
    \end{minipage}    
    \caption{Comparison of training loss under a common stepsize ($\alpha=0.12$ in subplot (a) and $\alpha=0.04$ in subplot (b)) between MILE and DIGing \cite{WeiShi3}, K-GT \cite{taolin1}, LED \cite{LED}. Subplot (a) shows the results on the MNIST dataset, while subplot (b) presents the results on the CIFAR-10 dataset. The number of local updates was set to $\tau=10$. Each curve represents the average of three independent runs.}
    \label{comparison}
\end{figure}

\begin{figure}[H]
        \centering
    \begin{minipage}[t]{0.7\linewidth}
        \centering
        \includegraphics[width=\textwidth]{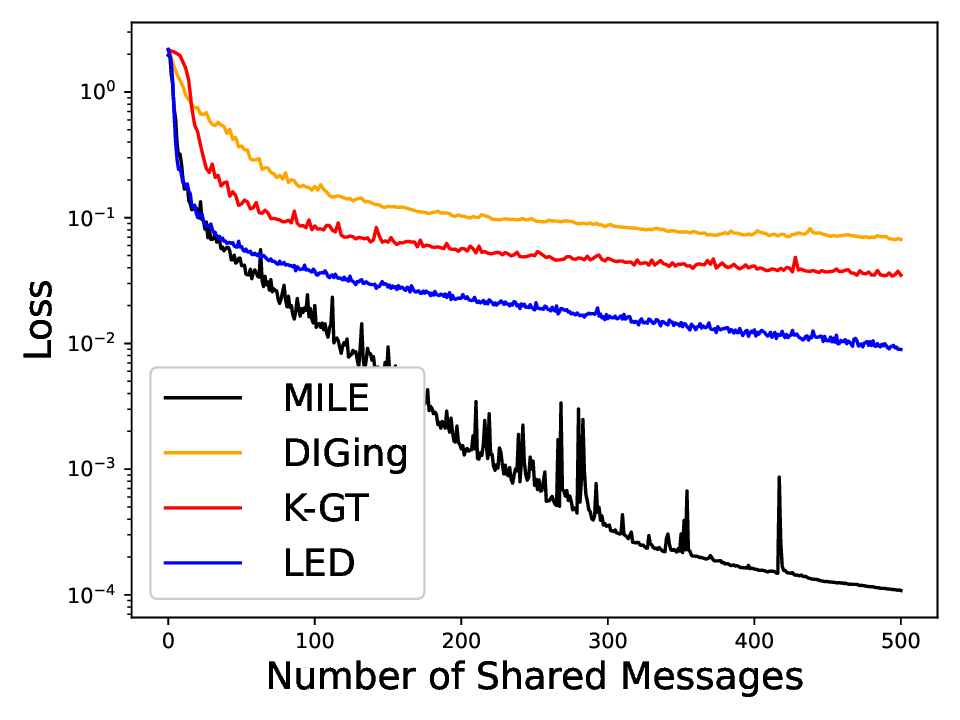}
        \centerline{$\quad\ $ \small{(a)}}
    \end{minipage}
    \begin{minipage}[t]{0.7\linewidth}
        \centering
        \includegraphics[width=\textwidth]{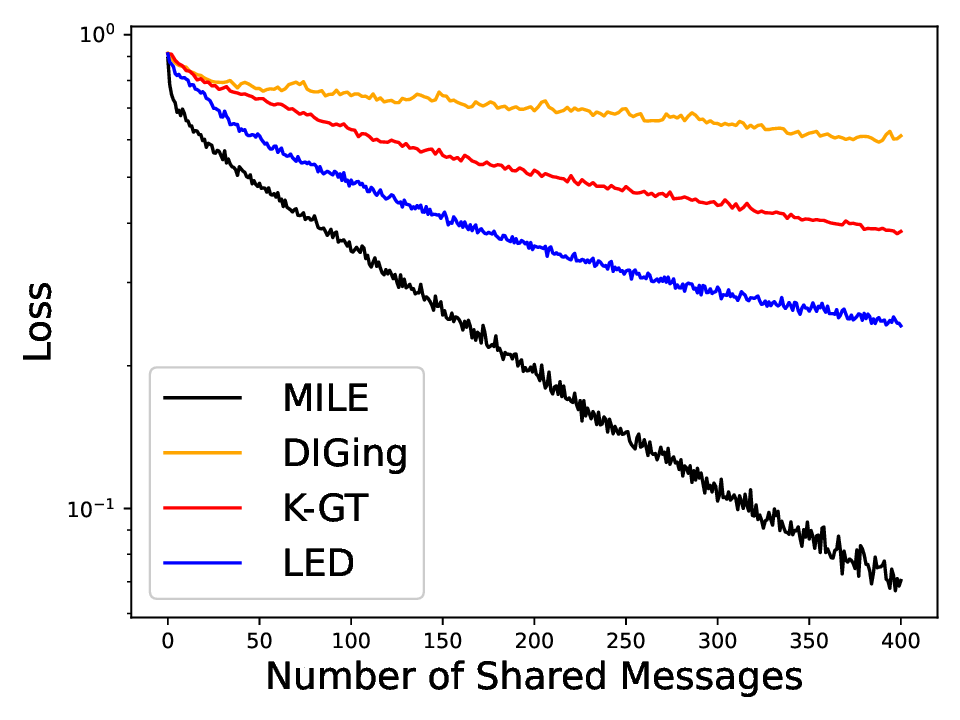}
        \centerline{$\quad\ $ \small{(b)}}
    \end{minipage}
    \caption{Comparison of training loss under the best-found stepsize for each algorithm. Subplot (a) shows the results on the MNIST dataset, while subplot (b) presents the results on the CIFAR-10 dataset. The number of local updates was set to $\tau=10$. Each curve represents the average of three independent runs.}
    \label{comparison2}
\end{figure}

{Figures~\ref{comparison} and~\ref{comparison2} present results under different stepsize settings. In Figures~\ref{comparison}~(subplot (a) for MNIST and subplot (b) for CIFAR-10), each curve represents the average of three independent runs using a common stepsize ($\alpha=0.12$ in subplot (a) and $\alpha=0.04$ in subplot (b)) across all algorithms. In Figure \ref{comparison2}~(subplot (a) for MNIST and subplot (b) for CIFAR-10), each curve shows the average of three independent runs using the best-found stepsize for each algorithm. As shown in Figures \ref{comparison} and~\ref{comparison2}, our algorithm achieves faster convergence than LED~\cite{LED}, K-GT~\cite{taolin1}, and DIGing~\cite{WeiShi3} on both the MNIST and CIFAR-10 datasets. These results confirm the effectiveness of the proposed algorithm.}

\section{Conclusion}\label{conclusion}
In this paper, we propose MILE, a fully decentralized optimization algorithm that supports multiple local updates and guarantees both optimality and consensus in general nonconvex settings—an achievement that, to the best of our knowledge, has not been reported before. In MILE, each agent communicates only one variable with its neighbors during each communication round and stores merely two local variables. This results in significant advantages over competing algorithms with multiple local updates in terms of communication and memory overhead. By formulating decentralized optimization with multiple local updates as a periodic system, we prove that MILE achieves an $O(1/T)$ convergence rate in both deterministic and stochastic gradient settings. A key enabler of this analysis is a set of novel theoretical tools we introduce, which leverage a lifting technique to derive explicit expressions for the recursive dynamics. To the best of our knowledge, this approach has not been previously explored and has broader implications beyond the algorithm proposed in this work. Machine learning experiments using benchmark datasets confirm the effectiveness of the proposed algorithm.

\appendices

\section{}\label{proof_computation_of_yi}
\subsection{Supporting Lemma for the Proof of Lemma \ref{computation_of_yi}}
\begin{lemma}[\cite{Ming_Yan2}]\label{ming_yan_lemma}
For two non-negative sequences $\{a(t)\}^{\infty}_{t=1}$ and $\{b(t)\}^{\infty}_{t=1}$ satisfying $a(t)=\sum^{t}_{s=1}\rho^{t-s}b(s)$ with $0\leq \rho<1$, we have
\begin{align*}
\sum^{k}_{t=s}a(t)\leq\sum^{k}_{t=s}\frac{b(s)}{1-\rho} \quad {\text and}\quad \sum^{k}_{t=s}a^2(t)\leq\sum^{k}_{t=s}\frac{b^2(s)}{(1-\rho)^2}.
\end{align*}
\end{lemma}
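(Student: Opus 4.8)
The plan is to prove both inequalities by the same two-step mechanism that exploits the geometric-convolution structure $a(t)=\sum_{s=1}^{t}\rho^{t-s}b(s)$: first interchange the order of the resulting double summation, and then collapse the inner geometric sum using the elementary bound $\sum_{m=0}^{N}\rho^{m}\le\frac{1}{1-\rho}$, valid for any $N\ge 0$ when $0\le\rho<1$. Since this is a known result \cite{Ming_Yan2}, the aim is a clean self-contained derivation rather than a new idea. Throughout, the non-negativity of $\{b(t)\}$ is essential: it is what lets me replace the truncated geometric sum by its infinite-sum upper bound $\frac{1}{1-\rho}$ while keeping the inequality pointed in the correct direction.

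For the first inequality I would substitute the definition and write $\sum_{t}a(t)=\sum_{t}\sum_{s\le t}\rho^{t-s}b(s)$, then swap the summation order to obtain $\sum_{s}b(s)\sum_{t\ge s}\rho^{t-s}$. Reindexing the inner sum via $m=t-s$ turns it into a truncated geometric series bounded by $\frac{1}{1-\rho}$, and factoring this constant out immediately yields the claimed $\frac{1}{1-\rho}\sum_{s}b(s)$ bound.

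The second inequality needs one extra ingredient, a weighted Cauchy--Schwarz applied to each individual $a(t)$. Splitting the weight symmetrically as $\rho^{t-s}=\rho^{(t-s)/2}\cdot\rho^{(t-s)/2}$ and applying Cauchy--Schwarz gives $a^2(t)\le\bigl(\sum_{s\le t}\rho^{t-s}\bigr)\bigl(\sum_{s\le t}\rho^{t-s}b^2(s)\bigr)\le\frac{1}{1-\rho}\sum_{s\le t}\rho^{t-s}b^2(s)$. Summing this over $t$ and again interchanging the order of summation reduces the right-hand side to exactly the same truncated-geometric-series computation as in the first part, contributing a second factor of $\frac{1}{1-\rho}$ and hence the coefficient $\frac{1}{(1-\rho)^2}$.

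I expect the only delicate point to be the symmetric square-root splitting of the weight in the Cauchy--Schwarz step: it is precisely this choice that makes one copy of $\sum_{s}\rho^{t-s}$ collapse to the constant $\frac{1}{1-\rho}$ while leaving the other copy as a clean convolution of $b^2$ on which the summation-swap argument can be rerun verbatim. Every remaining manipulation is a routine interchange of finite double sums combined with the geometric-tail bound, so I anticipate no further obstacles.
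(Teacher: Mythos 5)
Your argument is correct, and there is nothing in the paper to compare it against: Lemma~\ref{ming_yan_lemma} is imported by citation from \cite{Ming_Yan2} and the paper supplies no proof of its own. Your two-step mechanism --- swap the order of summation and bound the truncated geometric tail by $\tfrac{1}{1-\rho}$ for the first inequality, then the weighted Cauchy--Schwarz splitting $\rho^{t-s}=\rho^{(t-s)/2}\cdot\rho^{(t-s)/2}$ followed by a rerun of the first argument on $b^2$ for the second --- is exactly the standard derivation of this estimate, and every step goes through as you describe.

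One caveat worth flagging, though it concerns the statement rather than your proof. As printed, the lemma sums from $t=s$ to $k$ for an arbitrary lower limit $s$ (and the summand $b(s)$ is surely a typo for $b(t)$). Your interchange of summation implicitly takes the outer sum to start where the convolution starts, i.e.\ at $t=1$; for a general lower limit $s>1$ the swap produces additional terms $b(u)\sum_{t=s}^{k}\rho^{t-u}$ with $u<s$ that are not dominated by $\tfrac{1}{1-\rho}\sum_{t=s}^{k}b(t)$ (take $b(1)=1$, $b(t)=0$ for $t\ge 2$, $s=k=2$: the left side is $\rho>0$ while the right side is $0$). So the literal statement fails for $s>1$; what is true, and what the paper actually invokes in \eqref{norm_analyize_form6} where the sum runs from $1$ to $K$, is the $s=1$ case, which is precisely what you prove. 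You might state your sums with explicit limits from $1$ to $k$ to make clear which version you are establishing.
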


\subsection{Proof of Lemma \ref{computation_of_yi}}
Applying the inequality $\Vert a+b+c\Vert^2\leq 3\Vert a\Vert^2+3\Vert b\Vert^2+3\Vert c\Vert^2$ to \eqref{norm_analyize_form3} and \eqref{T_1_computation} yields
\begin{align}
&\sum^{T}_{t=1}\Vert y_i(t)\Vert^2\leq  3\sum^{K}_{k=0}\sum^{\tau}_{p=1}p^2A^2_4 A^2_2{\rho}^k\nonumber\\
&+3\sum^{K}_{k=0}\sum^{\tau}_{p=1}{\alpha}^2 \Big(\sum^{p-1}_{q=1}\sum^{q}_{j=1}\Vert h_i(k\tau+j)-h_i(k\tau+j-1)\Vert\Big)^2\nonumber\\
&+3A^2_4{\alpha}^2(\rho \tau+\tau-1)^2\sum^{\tau}_{p=1}p^2\sum^{K}_{k=0}\Big(\sum^{k}_{s=1}(\sqrt{\rho})^{k-s}\nonumber\\
& \quad\sum^{\tau}_{j=1}\Vert h_i(s\tau+1-j)-h_i(s\tau-j)\Vert\Big)^2.\label{norm_analyize_form4}
\end{align}

For the first term on the right-hand side of \eqref{norm_analyize_form4}, we have
\begin{align}
3\sum^{K}_{k=0}\sum^{\tau}_{p=1}p^2 A^2_4 A^2_2{\rho}^k\leq\frac{\tau(\tau+1)(2\tau+1) A^2_4 A^2_2}{2(1-{\rho})}\label{norm_analyize_form5}
\end{align}
based on the relation $\sum^{\tau}_{p=1}p^2=\frac{\tau(\tau+1)(2\tau+1)}{6}$.

For the second term on the right-hand side of  \eqref{norm_analyize_form4}, we have
\begin{align*}
&\sum^{K}_{k=0}\sum^{\tau}_{p=1}{\alpha}^2 \Big( \sum^{p-1}_{q=1}\sum^{q}_{j=1}\Vert h_i(k\tau+j)-h_i(k\tau+j-1)\Vert\Big)^2\nonumber\\
\leq & {\alpha}^2\tau^3\sum^{K}_{k=0} \Big( \sum^{\tau-1}_{j=1}\Vert h_i(k\tau+j)-h_i(k\tau+j-1)\Vert\Big)^2
\end{align*}
because of $p\leq\tau$ and $h\leq \tau$. Applying the relation $(\sum^{\tau}_{j=1}a_j)^2\leq\tau\sum^{\tau}_{j=1} a^2_j$ to the above inequality yields
\begin{align}
&\sum^{K}_{k=0}\sum^{\tau}_{p=1}{\alpha}^2 \Big( \sum^{p-1}_{q=1}\sum^{q}_{j=1}\Vert h_i(k\tau+j)-h_i(k\tau+j-1)\Vert\Big)^2\nonumber\\
\leq & {\alpha}^2\tau^4\sum^{K}_{k=0} \sum^{\tau-1}_{j=1}\Vert h_i(k\tau+j)-h_i(k\tau+j-1)\Vert^2.\label{norm_analyize_form7}
\end{align}

For the third term on the right-hand side of \eqref{norm_analyize_form4},  we have
\begin{align}
&\sum^{K}_{k=1} \Big(\sum^{k}_{s=1}(\sqrt{{\rho}})^{k-s}\sum^{\tau}_{j=1}\Vert h_i(\tau s+1-j)-h_i(\tau s-j)\Vert\Big)^2\nonumber\\
\leq & \frac{\tau\sum^{K}_{k=1}\sum^{\tau}_{j=1}\Vert h_i(k\tau+1-j)-h_i(k\tau-j) \Vert^2}{(1-\sqrt{{\rho}})^2}\label{norm_analyize_form6}
\end{align}
according to Lemma \ref{ming_yan_lemma} and the relation $(\sum^{\tau}_{j=1}a_j)^2\leq\tau\sum^{\tau}_{j=1} a^2_j$.

Substituting  \eqref{norm_analyize_form5},  \eqref{norm_analyize_form7}, and \eqref{norm_analyize_form6} into \eqref{norm_analyize_form4} yields
\begin{align}\label{appendix_lemma1_add}
\sum^{T}_{t=1}\Vert y_i(t)\Vert^2\leq B_1+{\alpha}^2B_2\sum^{T-1}_{t=1}\Vert h_i(t)-h_i(t-1)\Vert^2,
\end{align}
where 
\begin{equation}\label{definition_B1B2}
\left\{
\begin{aligned}
B_1=&\frac{\tau(\tau+1)(2\tau+1) }{2(1-{\rho})}A^2_2 A^2_4,\\
B_2=&3\tau^4+\frac{\tau^2(\tau+1)(2\tau+1)(\rho\tau+\tau-1)^2}{2(1-\sqrt{{\rho}})^2} A^2_4,
\end{aligned}
\right.
\end{equation}
which establishes \eqref{lemma2_equation1}.

For the term  $\Vert h_i(t)-h_i(t-1)\Vert^2$ in \eqref{appendix_lemma1_add}, we have
\begin{align}
&\sum^{N}_{i=2}\Vert h_i(t-1)- h_i(t)\Vert^2\nonumber\\
\leq &\Vert \overline{\nabla f}(X(t))- \overline{\nabla f}(X(t-1))\Vert^2_{F}\nonumber\\
\leq& L^2\sum^{N}_{i=1} \Vert x_i(t)- x_i(t-1)\Vert^2,\label{add_1_lemma_complex1}
\end{align}
where the first and second inequalities follow from \eqref{definition_y_h} and Assumption \ref{smooth_assumption}, respectively. For the term $\sum^{N}_{i=1} \Vert x_i(t)- x_i(t-1)\Vert^2$ in \eqref{add_1_lemma_complex1}, we have
\begin{align}
& \sum^{N}_{i=1} \Vert x_i(t)- x_i(t-1)\Vert^2\nonumber\\
= & \sum^{N}_{i=1} \Vert Y(t)P^{\bf T}e(i)-Y(t-1)P^{\bf T}e(i)\Vert^2\nonumber\\
= & \sum^{N}_{i=1} \Vert y_i(t)- y_i(t-1)\Vert^2,\label{add_1_lemma_complex2}
\end{align}
where the first and second inequalities follow from \eqref{definition_y_h} and the orthogonal property $PP^{\top}=\mathbf{I}_N$, respectively. Substituting \eqref{add_1_lemma_complex2} into \eqref{add_1_lemma_complex1} completes the proof of Lemma \ref{computation_of_yi}.

\section{Proof of Theorem \ref{theorem1}}\label{proof_theorem1}
Under stepsize $0<{\alpha}\leq\frac{1}{\sqrt{5B_2}L}$, we have $1-{L{\alpha}}-\frac{{\alpha}^4 L^4 B_2}{B_3}>0$. Thus, from \eqref{convergence_property_last_step}, we have
\begin{align}
\frac{1}{T}\sum^{T}_{t=1}\Vert \nabla f(\overline{X}(t)) \Vert^2\leq& \frac{2\bigl\{ f(\overline{X}(1))-f(x^*)\bigr\}}{\tau{\alpha} K} + \frac{C_1 L^2}{B_3 N\tau K}.\label{appendixA_1}
\end{align}
For the second term on the right-hand side of \eqref{appendixA_1}, we have
\begin{align}
\frac{C_1L^2}{B_3N\tau}\leq&\frac{{\alpha}^2 L^4  B_2 ({\alpha}^2\Vert\overline{\nabla f}(X(0))\Vert^2+2A^2_2)}{B_3\tau}+\frac{B_1L^2}{B_3\tau}\label{analyze_term_appendixA1}
\end{align}
according to \eqref{definition_C1}.

The condition of stepsize $0<{\alpha}\leq\frac{1}{\sqrt{5B_2}L}$ implies $B_3=1-4{\alpha}^2L^2B_2\geq{\alpha}^2L^2B_2$, which further implies
\begin{align}
\frac{B_2}{B_3}\leq \frac{1}{{\alpha}^2 L^2}.\label{analyze_term_appendixA2}
\end{align}
Moreover, we have
\begin{align}
\frac{B_1}{B_3}\leq&\frac{1}{\alpha^2L^2}\frac{2(1-\sqrt{{\rho}})^2\tau(\tau+1)(2\tau+1) A^2_4 A^2_2}{2(1-{\rho})\rho^2\tau^4(\tau+1)(2\tau+1)A^2_4}\nonumber\\
\leq&\frac{ 2\Vert X(0)\Vert^2_{F}+2\Vert X(1)\Vert^2_{F}}{\alpha^2L^2\rho^2\tau^3},\label{analyze_term_appendixA3}
\end{align}
where the first inequality follows from \eqref{definition_B1B2} and \eqref{analyze_term_appendixA2}, and the second inequality follows from \eqref{many_definition_A} and $A^2_2\leq2\Vert X(0)\Vert^2_{F}+2\Vert X(1)\Vert^2_{F}$.

Substituting \eqref{analyze_term_appendixA1} and \eqref{analyze_term_appendixA3} into \eqref{appendixA_1} yields
\begin{align*}
&\frac{1}{T}\sum^{T}_{t=1}\Vert \nabla f(\overline{X}(t)) \Vert^2\nonumber\\
\leq& \frac{2}{\tau{\alpha} K} \big\{ f(\overline{X}(1))-f(x^*)\big\}+ \frac{ \Vert\overline{\nabla f}(X(0))\Vert^2}{\tau^3 K}\\
&+ \frac{4\Vert X(0)\Vert^2_{F}+4\Vert X(1)\Vert^2_{F}}{\tau^3\alpha^2 K}+\frac{ 2\Vert X(0)\Vert^2_{F}+2\Vert X(1)\Vert^2_{F}}{\alpha^2\rho^2\tau^4 K},
\end{align*}
which completes the proof of Theorem \ref{theorem1}.

\section{Proof of Theorem \ref{corollary1}}\label{proof_corollary1}
From \eqref{consensus_property} and \eqref{analyze_term_appendixA2}, we have
\begin{align}
&\frac{1}{NT}\sum^{N}_{i=1}\sum^{T}_{t=1}\Vert \overline{X}(t)-x_i(t)\Vert^2\nonumber\\
\leq&  \frac{C_1}{B_3NT}+\frac{{\alpha}^2}{T} \sum^{T-1}_{t=1}\Vert\overline{\nabla f}(X(t))\Vert^2,\label{corollary1_proof}
\end{align}
under stepsize $0<{\alpha}\leq\frac{1}{\sqrt{5B_2}L}$. In addition, from \eqref{analyze_term_appendixA2}, we have $1-{L{\alpha}}-\frac{{\alpha}^4 L^4 B_2}{B_3}>\frac{1}{2}.$ Substituting the above inequality into \eqref{convergence_property_last_step} yields
\begin{align}
 \sum^{T}_{t=1}\Vert \overline{\nabla f}(X(t))\Vert^2 \leq \frac{4}{{\alpha}} \bigr\{ f(\overline{X}(1))-f(x^*)\bigl\}+ \frac{2C_1 L^2}{B_3 N}.\label{corollary2_proof}
\end{align}
Substituting \eqref{corollary1_proof} into \eqref{corollary2_proof} yields
\begin{align}
&\frac{1}{NT}\sum^{N}_{i=1}\sum^{T}_{t=1}\Vert \overline{X}(t)-x_i(t)\Vert^2\nonumber\\
\leq&  \frac{3C_1}{B_3NT}+\frac{4{\alpha}}{T} \bigl\{ f(\overline{X}(1))-f(x^*)\bigr\}.\label{corollary3_proof}
\end{align}
Substituting \eqref{analyze_term_appendixA1} and \eqref{analyze_term_appendixA3} into \eqref{corollary3_proof} completes the proof of Theorem \ref{corollary1}.

\section{}\label{proof_theorem2}
\subsection{Proof of Theorem \ref{theorem2}}
For convenience of representation, we define 
\begin{equation}\label{definition_GT}
\left\{
    \begin{aligned}
    {G}(t)=&[\nabla f_1(x_1(t),\xi_1(t)),\cdots, \nabla f_N(x_N(t),\xi_N(t))],\\
    \overline{G}(t)=&\frac{1}{N}\sum^{N}_{i=1}\nabla f_i({x}_i(t),\xi_i(t)).
\end{aligned}
\right.
\end{equation}

Analogous to the processes \eqref{convergence_analysis_process1}-\eqref{average_xt_form}, we have 
\begin{align}\label{x_bar_appendix_Add1}
\overline{X}(t+1)=\overline{X}(t)-{\alpha} \overline{G}(t).
\end{align}
Combining Assumption \ref{smooth_assumption} and \eqref{x_bar_appendix_Add1} leads to
\begin{align*}
&f(\overline{X}(t+1))\\
\leq & f(\overline{X}(t))-\langle \nabla f(\overline{X}(t)),{\alpha} \overline{G}(t)\rangle+\frac{L{\alpha^2}}{2}\Vert \overline{G}(t)\Vert^2.
\end{align*}
Taking the expectation on both sides of the above inequality yields
\begin{align}
&\mathbb{E}[f(\overline{X}(t+1))]\nonumber\\
\leq & \mathbb{E}[f(\overline{X}(t))]-{\alpha} \mathbb{E}[\langle \nabla f(\overline{X}(t)),\overline{\nabla f}(X(t))\rangle]\nonumber\\
&+\frac{L{\alpha^2}}{2}\mathbb{E}[\Vert \overline{\nabla f}(X(t))\Vert^2]+\frac{L{\alpha^2}}{2}\mathbb{E}[\Vert \overline{G}(t)-\overline{\nabla f}(X(t))\Vert^2]\nonumber\\
\leq & \mathbb{E}[f(\overline{X}(t))]-\frac{{\alpha}}{2} \mathbb{E}[\Vert \nabla f(\overline{X}(t))\Vert^2]\nonumber\\
&-\bigl(\frac{{\alpha}}{2} -\frac{L{\alpha^2}}{2}\bigr)\mathbb{E}[\Vert\overline{\nabla f}(X(t))\Vert^2]\nonumber\\
&+\frac{{\alpha}}{2} \mathbb{E}[\Vert\nabla f(\overline{X}(t))-\overline{\nabla f}(X(t))\Vert^2]+\frac{L{\alpha^2}}{2}\sigma^2,\label{add_stochastic_averaging_1}
\end{align}
where the first inequality follows from \eqref{noise1_assumption} in Assumption \ref{stochastic_gradient}, and the second inequality follows from \eqref{noise2_assumption} in Assumption \ref{stochastic_gradient} and the relation $2\langle a,b\rangle=\Vert a+b\Vert^2-\Vert a\Vert^2-\Vert b\Vert^2$. 

For the term $\mathbb{E}[\Vert\nabla f(\overline{X}(t))-\overline{\nabla f}(X(t))\Vert^2]$ in \eqref{add_stochastic_averaging_1}, we have
\begin{align}
&\mathbb{E}[\Vert\nabla f(\overline{X}(t))-\overline{\nabla f}(X(t))\Vert^2]\nonumber\\
\leq &\frac{1}{N}\sum^{N}_{i=1}\mathbb{E}[\Vert \nabla f_i(\overline{X}(t))-\nabla f_i({x}_i(t))\Vert^2]\nonumber\\
\leq &\frac{L^2}{N}\sum^{N}_{i=1}\mathbb{E}[\Vert \overline{X}(t)-{x}_i(t)\Vert^2],\label{add_stochastic_averaging_2}
\end{align}
where the first and second inequalities follow from the relation $\Vert \sum^{N}_{i=1}a_i\Vert^2\leq N\sum^{N}_{i=1}\Vert a_i\Vert^2$ and  Assumption \ref{smooth_assumption}, respectively. Substituting \eqref{add_stochastic_averaging_2} into \eqref{add_stochastic_averaging_1} yields
\begin{align}
&\frac{{\alpha}}{2} \mathbb{E}[\Vert \nabla f(\overline{X}(t))\Vert^2]+\bigl(\frac{{\alpha}}{2} -\frac{L{\alpha^2}}{2}\bigr)\mathbb{E}[\Vert\overline{\nabla f}(X(t))\Vert^2]\nonumber\\
\leq & \mathbb{E}[f(\overline{X}(t))]-\mathbb{E}[f(\overline{X}(t+1))]+\frac{L{\alpha^2}}{2}\sigma^2\nonumber\\
&+ \frac{{\alpha} L^2}{2N}\sum^{N}_{i=1}\mathbb{E}[\Vert \overline{X}(t)-{x}_i(t)\Vert^2],\label{important_most_stochastic_average_property}
\end{align}
which further implies
\begin{align}
& \sum^{T}_{t=1}\bigr\{\mathbb{E}[\Vert \nabla f(\overline{X}(t))\Vert^2]+(1 -{L{\alpha}})\mathbb{E}[\Vert\overline{\nabla f}(X(t))\Vert^2]\bigr\}\nonumber\\
\leq &\frac{2}{{\alpha}}\bigl\{ \mathbb{E}[f(\overline{X}(1))]-f(x^*)\bigr\}+{L{\alpha}}\sigma^2 T\nonumber\\
&+\frac{ L^2}{N}\sum^{T}_{t=1}\sum^{N}_{i=1}\mathbb{E}[\Vert \overline{X}(t)-{x}_i(t)\Vert^2]\nonumber\\
\leq & \frac{2}{{\alpha}}\bigl\{ \mathbb{E}[f(\overline{X}(1))]-f(x^*)\bigr\}+ {L{\alpha}}\sigma^2 T+\frac{12B_2L^2}{B_4}{\alpha^2}\sigma^2T\nonumber\\
&+\frac{C_2 L^2}{B_4N}+\frac{6{\alpha}^4L^4B_2}{B_4}\sum^{T-1}_{t=1}\mathbb{E}[\Vert\overline{\nabla f}(X(t))\Vert^2],\label{add_more_stochastic1}
\end{align}
where the first and second inequalities follow from (\ref{important_most_stochastic_average_property}) and Lemma \ref{consensus_stochastic}, respectively.

If a stepsize satisfies $0<\alpha\leq\frac{1}{\sqrt{13B_2}L}$, we have 
\begin{equation}\label{last_term}
\left\{
\begin{aligned}
&B_4\geq {\alpha^2}L^2 B_2,\\
&1 -{L{\alpha}}-\frac{6{\alpha}^4L^4B_2}{B_4}>\frac{1}{2},\\
&1-12\alpha^2B_2L^2\geq \frac{1}{13}.
\end{aligned}
\right.
\end{equation}
Substituting \eqref{last_term} into \eqref{add_more_stochastic1} yields
\begin{align}
 \frac{1}{T}\sum^{T}_{t=1}\mathbb{E}[\Vert& \nabla f(\overline{X}(t))\Vert^2]\leq  \frac{2}{{\alpha} T}\bigl\{ \mathbb{E}[f(\overline{X}(1))]-f(x^*)\bigr\} \nonumber\\
&+\frac{C_2 L^2}{B_4NT}+ {L{\alpha}}\sigma^2+{156B_2}{\alpha^2}L^2\sigma^2.\label{appendixB1}
\end{align}
For the term $\frac{C_2 L^2}{B_4NT}$ in \eqref{appendixB1}, we have
\begin{align}
&\frac{C_2 L^2}{B_4N\tau K}\nonumber\\
\leq&\frac{6{\alpha}^4 L^4  B_2 \mathbb{E}[\Vert\overline{\nabla f}(X(0))\Vert^2]}{B_4\tau K}+\frac{6{\alpha}^2 L^4  B_2 A^2_5}{B_4\tau K}+\frac{B_1L^2}{B_4\tau K}\nonumber\\
\leq&\frac{6  \mathbb{E}[\Vert\overline{\nabla f}(X(0))\Vert^2]}{\tau^3 K}+\frac{12\mathbb{E}[\Vert X(1)\Vert_F^2]+12\mathbb{E}[\Vert X(0)\Vert_F^2]}{\tau^3\alpha^2 K}\nonumber\\
&+\frac{B_1L^2}{B_4\tau K},\label{appendixB2}
\end{align}
where the first and second inequalities follow from \eqref{definition_c2a5} and \eqref{last_term}, respectively. Moreover, from \eqref{analyze_term_appendixA3} and \eqref{last_term}, we have
\begin{align}
\frac{B_1}{B_4}\leq\frac{ 2\mathbb{E}[\Vert X(0)\Vert^2_{F}]+2\mathbb{E}[\Vert X(0)\Vert^2]}{\alpha^2L^2\rho^2\tau^3}.\label{appendixB3}
\end{align}
Substituting \eqref{appendixB2} and \eqref{appendixB3} into \eqref{appendixB1} completes the proof of Theorem \ref{theorem2}.

\subsection{Supporting Lemmas for the Proof of Theorem \ref{theorem2}}
\begin{lemma}\label{consensus_stochastic}
Under stepsize $0<\alpha\leq\frac{1}{\sqrt{13B_2}L}$, we have
\begin{align}
&\frac{1}{NT}\sum^{N}_{i=1}\sum^{T}_{t=1}\mathbb{E}[\Vert \overline{X}(t)-x_i(t)\Vert^2]\leq  \frac{C_2}{B_4NT}\nonumber\\
&+\frac{6{\alpha}^4L^2B_2}{B_4T}\sum^{T-1}_{t=1}\mathbb{E}[\Vert\overline{\nabla f}(X(t))\Vert^2]+\frac{12B_2}{B_4}{\alpha^2}\sigma^2,\label{last_lemma_stochastic}
\end{align}
where
\begin{equation}\label{definition_c2a5}
\left\{
\begin{aligned}
C_2=&6{\alpha}^2 L^2 N B_2 ({\alpha}^2\mathbb{E}[\Vert\overline{\nabla f}(X(0))\Vert^2]+A^2_5)+NB_1,\\
A_5=&2\mathbb{E}[\Vert X(1)\Vert^2]+2\mathbb{E}[\Vert X(0)\Vert^2],\\
B_4=&1-12{\alpha^2}L^2 B_2.
\end{aligned}
\right.
\end{equation}
\end{lemma}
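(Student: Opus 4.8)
The plan is to mirror the deterministic consensus derivation that culminates in \eqref{consensus_property}, tracking carefully the two places where gradient noise enters. First I would observe that the purely algebraic bound of Lemma~\ref{computation_of_yi}, namely $\sum_{t=1}^{T}\Vert y_i(t)\Vert^2\le B_1+\alpha^2 B_2\sum_{t=1}^{T-1}\Vert h_i(t)-h_i(t-1)\Vert^2$, remains valid verbatim in the stochastic setting, since it is a consequence of the recursion \eqref{convergence_analysis_process4} and the lifting identity of Lemma~\ref{lemma_recursive}, both of which keep the same form once $h_i(t)$ is reinterpreted as the eigenspace coordinate of the stochastic gradient matrix $G(t)$ in \eqref{definition_GT}. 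Taking expectations, summing over $i=2,\dots,N$, and bounding $\mathbb{E}[A_2^2]$ by $A_5$ as in \eqref{definition_c2a5} then yields the expected counterpart of \eqref{very_important_inequality2}.

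The crucial new step is replacing the noiseless Lipschitz bound $\sum_{i=2}^{N}\Vert h_i(t-1)-h_i(t)\Vert^2\le L^2\sum_{i=1}^{N}\Vert y_i(t)-y_i(t-1)\Vert^2$ of Lemma~\ref{computation_of_yi} by its stochastic analog. Here I would write each stochastic gradient difference $\nabla f_i(x_i(t),\xi_i(t))-\nabla f_i(x_i(t-1),\xi_i(t-1))$ as the true gradient difference $\nabla f_i(x_i(t))-\nabla f_i(x_i(t-1))$ plus the two centered noise terms at times $t$ and $t-1$, apply $\Vert a+b+c\Vert^2\le 3\Vert a\Vert^2+3\Vert b\Vert^2+3\Vert c\Vert^2$, and take expectations. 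Assumption~\ref{smooth_assumption} controls the first term by $3L^2\mathbb{E}\Vert x_i(t)-x_i(t-1)\Vert^2$, while the variance bound \eqref{noise2_assumption} of Assumption~\ref{stochastic_gradient} bounds the remaining two by $6\sigma^2$ per agent; combined with the isometry \eqref{add_1_lemma_complex2} this gives $\sum_{i=2}^{N}\mathbb{E}\Vert h_i(t)-h_i(t-1)\Vert^2\le 3L^2\sum_{i=1}^{N}\mathbb{E}\Vert y_i(t)-y_i(t-1)\Vert^2+6N\sigma^2$. The extra factor $3$ here is exactly what turns the deterministic coefficient $4\alpha^2 L^2 B_2$ in \eqref{consensus_property} into the $12\alpha^2 L^2 B_2$ defining $B_4=1-12\alpha^2 L^2 B_2$, and the $6N\sigma^2$ term is the source of the additive $\sigma^2$ contributions.

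With this inequality in hand I would substitute it into the expected Lemma~\ref{computation_of_yi}, isolate the consensus mode $i=1$ as in \eqref{very_important_inequality3} using the stochastic averaging identity \eqref{x_bar_appendix_Add1} and $y_1(t)=\sqrt{N}\,\overline{X}(t)$ to get $\Vert y_1(t)-y_1(t-1)\Vert^2=N\alpha^2\Vert\overline{G}(t-1)\Vert^2$, and then invoke unbiasedness \eqref{noise1_assumption} together with $\Vert a+b\Vert^2\le 2\Vert a\Vert^2+2\Vert b\Vert^2$ to replace $\mathbb{E}\Vert\overline{G}(t-1)\Vert^2$ by $2\mathbb{E}\Vert\overline{\nabla f}(X(t-1))\Vert^2+2\sigma^2/N$; this produces the $\tfrac{6\alpha^4 L^2 B_2}{B_4 T}$ gradient term. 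For the modes $i\ge2$ I would use $\Vert y_i(t)-y_i(t-1)\Vert^2\le 2\Vert y_i(t)\Vert^2+2\Vert y_i(t-1)\Vert^2$, identify $\sum_{i=2}^{N}\Vert y_i(t)\Vert^2=\sum_{i=1}^{N}\Vert\overline{X}(t)-x_i(t)\Vert^2$ via \eqref{consensus_analysis_important}, and move the $12\alpha^2 L^2 B_2$-weighted consensus term to the left; the stepsize condition $0<\alpha\le 1/(\sqrt{13B_2}\,L)$ ensures $B_4\ge\alpha^2 L^2 B_2>0$ by \eqref{last_term}, so dividing by $B_4$ and $NT$ gives \eqref{last_lemma_stochastic}. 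The main obstacle will be the noise bookkeeping in the second step: I expect the delicate part to be organizing the conditioning so that, via the tower property and the across-agent and across-time independence of the samples $\xi_i(t)$, the cross terms between the martingale-difference noise and the $\mathcal{F}_{t-1}$-measurable past vanish and the per-step variance bound $\sigma^2$ applies cleanly at every occurrence, while simultaneously keeping all constants consistent with the stated $C_2$, $A_5$, and $B_4$.
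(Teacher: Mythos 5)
Your proposal is correct and follows essentially the same route as the paper's proof: the same reuse of Lemma~\ref{computation_of_yi} in expectation with $h_i(t)$ built from the stochastic gradient matrix $G(t)$, the same three-term decomposition giving $\sum_{i=2}^{N}\mathbb{E}\Vert h_i(t)-h_i(t-1)\Vert^2\le 3L^2\sum_{i=1}^{N}\mathbb{E}\Vert y_i(t)-y_i(t-1)\Vert^2+6N\sigma^2$, the same treatment of the $i=1$ mode via $\overline{X}(t+1)=\overline{X}(t)-\alpha\overline{G}(t)$, and the same rearrangement yielding $B_4=1-12\alpha^2L^2B_2$. The only cosmetic deviations are that you bound the averaged-noise variance by $2\sigma^2/N$ where the paper uses the looser $2\sigma^2$ (both suffice for the stated constants), and the conditioning/cross-term bookkeeping you flag as delicate is actually unnecessary here since the paper simply applies $\Vert a+b+c\Vert^2\le 3\Vert a\Vert^2+3\Vert b\Vert^2+3\Vert c\Vert^2$ before taking expectations.
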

\begin{proof}
Algorithm \ref{algorithm_recur} can be equivalently expressed as
\begin{align}
X(t{+}1) &= 2X(t)W(t) - X(t{-}1)W(t) \nonumber\\
&\quad - \alpha G(t)W(t) + \alpha  G(t{-}1)W(t), \label{convergence_analysis_process1_stochastic}
\end{align}
where $X(t)$, $W(t)$, and $G(t)$ are defined in \eqref{matrix_definition}, \eqref{matrix_W}, and \eqref{definition_GT}, respectively. Furthermore, we define
\begin{equation}\label{definition_y_h_stochastic}
\left\{
\begin{aligned}
Y(t) &= X(t)P = [y_1(t), y_2(t), \cdots, y_N(t)],\\
H(t) &= G(t)P = [h_1(t), h_2(t), \cdots, h_N(t)],
\end{aligned}
\right.
\end{equation}
where $P$ is the  orthogonal matrix satisfying $P^{\top}P = PP^{\top} = \mathbf{I}_N$ and $W(t)= P \Lambda(t) P^{\top}$ (see \eqref{matrix_form_time_varying}). Following a similar approach to that used in deriving \eqref{appendix_lemma1_add}, we obtain 
\begin{align}
\sum^{T}_{t=1}\mathbb{E}[\Vert y_i(t)\Vert^2]\leq& B_1+{\alpha}^2B_2\sum^{T-1}_{t=1}\mathbb{E}[\Vert h_i(t)-h_i(t-1)\Vert^2],\label{very_important_inequality1_stochastic}
\end{align}
where $B_1$ and $B_2$ are defined in \eqref{definition_B1B2}. For the term $\sum^{N}_{i=2} \mathbb{E}[\Vert h_i(t)-h_i(t-1)\Vert^2]$ in \eqref{very_important_inequality1_stochastic}, we can obtain
\begin{align*}
\sum^{N}_{i=2} \mathbb{E}[\Vert h_i(t)-h_i(t-1)\Vert^2]\leq \mathbb{E}[\Vert G(t)-G(t-1)\Vert^2_F]
\end{align*}
from \eqref{definition_y_h_stochastic}, the definition of $\Vert\cdot\Vert_F$, and the orthogonal property $P P^\top=\mathbf{I}_N$. Applying \eqref{definition_GT}, \eqref{definition_y_h_stochastic}, and the relation $\Vert a+b+c\Vert^2\leq 3\Vert a\Vert^2+3\Vert b\Vert^2+3\Vert c\Vert^2$ to the above inequality yields
\begin{align}
&\sum^{N}_{i=2} \mathbb{E}[\Vert h_i(t)-h_i(t-1)\Vert^2]\nonumber\\
\leq&3\sum^{N}_{i=1} \mathbb{E}[\Vert \nabla f_i(x_i(t-1),\xi_i(t-1))- \nabla f_i(x_i(t-1))\Vert^2]\nonumber\\
&+3\sum^{N}_{i=1} \mathbb{E}[\Vert \nabla f_i(x_i(t),\xi_i(t))- \nabla f_i(x_i(t))\Vert^2]\nonumber\\
&+3\sum^{N}_{i=1} \mathbb{E}[\Vert \nabla f_i(x_i(t))- \nabla f_i(x_i(t-1))\Vert^2].\label{very_important_inequality1_stochastic2_add2}
\end{align}
Furthermore, we can obtain
\begin{align}
&\sum^{N}_{i=2} \mathbb{E}[\Vert h_i(t)-h_i(t-1)\Vert^2]\nonumber\\
\leq& 6\sigma^2 N+3L^2 \sum^{N}_{i=1}\mathbb{E}[\Vert y_i(t)- y_i(t-1)\Vert^2]\label{very_important_inequality1_stochastic2}
\end{align}
based on Assumption \ref{stochastic_gradient}, \eqref{very_important_inequality1_stochastic2_add2}, \eqref{definition_y_h_stochastic}, and the orthogonal property $P P^\top=\mathbf{I}_N$. Substituting (\ref{very_important_inequality1_stochastic2}) into (\ref{very_important_inequality1_stochastic}) leads to
\begin{align}
&\sum^{N}_{i=2}\sum^{T}_{t=1}\mathbb{E}[\Vert y_i(t)\Vert^2]\leq6{\alpha^2} NB_2\sigma^2T+NB_1\nonumber\\
& \quad+3{\alpha^2}L^2 B_2\sum^{T-1}_{t=1}\sum^{N}_{i=1}\mathbb{E}[\Vert y_i(t)- y_i(t-1)\Vert^2].\label{appendix_d_1}
\end{align}

Then, we need to analyze the term $\mathbb{E}[\Vert y_1(t)- y_1(t-1)\Vert^2]$ in \eqref{appendix_d_1}. From \eqref{orthoghnal_matrix}, \eqref{lambda1_rho1_property}, and \eqref{definition_y_h_stochastic}, we can obtain
\begin{align}\label{y_1_t_overline_stochastic}
y_1(t)=X(t)v_1=\sqrt{N} \overline{X}(t).
\end{align}
Combining \eqref{x_bar_appendix_Add1} and the relation $\Vert a+b\Vert^2\leq2\Vert a\Vert^2+2\Vert b\Vert^2$ leads to
\begin{align*}
&\Vert \overline{X}(t+1)-\overline{X}(t)\Vert^2\nonumber\\
\leq& 2{\alpha^2} \Vert\overline{G}(t)-\overline{\nabla f}(X(t))\Vert^2+2{\alpha^2} \Vert\overline{\nabla f}(X(t))\Vert^2,
\end{align*}
which further yields 
\begin{align*}
\mathbb{E}[\Vert \overline{X}(t+1)-\overline{X}(t)\Vert^2]\leq  2{\alpha^2}\bigl(\sigma^2+ \mathbb{E}[\Vert\overline{\nabla f}(X(t))\Vert^2]\bigr)
\end{align*}
based on Assumption \ref{stochastic_gradient}. Thus, from \eqref{y_1_t_overline_stochastic} and  the above inequality, we have
\begin{align*}
\mathbb{E}[\Vert y_1(t+1)- y_1(t) \Vert^2]\leq 2{\alpha^2} N\bigl(\sigma^2+\mathbb{E}[\Vert\overline{\nabla f}(X(t))\Vert^2]\bigr).
\end{align*}
Substituting the above inequality into \eqref{appendix_d_1} and applying the inequality $\Vert a+b\Vert^2\leq2\Vert a\Vert^2+2\Vert b\Vert^2$ yield
\begin{align*}
&\sum^{N}_{i=2}\sum^{T}_{t=1}\mathbb{E}[\Vert y_i(t)\Vert^2]\\
\leq & NB_1+6{\alpha^2}L^2 B_2\sum^{T-1}_{t=1}\sum^{N}_{i=2}\mathbb{E}[\Vert y_i(t)\Vert^2+ \Vert y_i(t-1)\Vert^2]\\
&+6{\alpha}^4L^2B_2N\sum^{T-1}_{t=1}\mathbb{E}[\Vert\overline{\nabla f}(X(t-1))\Vert^2]+12B_2N{\alpha^2}\sigma^2T.
\end{align*}
Substituting \eqref{consensus_analysis_important} into the above inequality and rearranging terms yield
\begin{align*}
&B_4\sum^{N}_{i=1}\sum^{T}_{t=1}\mathbb{E}[\Vert \overline{X}(t)-x_i(t)\Vert^2]\\
\leq & C_2+6{\alpha}^4L^2B_2N\sum^{T-1}_{t=1}\mathbb{E}[\Vert\overline{\nabla f}(X(t))\Vert^2]+12B_2N{\alpha^2}\sigma^2T,
\end{align*}
where $B_4$ and $C_2$ are defined in \eqref{definition_c2a5}. Moreover, under stepsize $0<\alpha\leq\frac{1}{\sqrt{13B_2}L}$, we have $B_4>0$. To complete the proof of Lemma \ref{consensus_stochastic}, we divide both sides of the above inequality by \( B_4 \). 
\end{proof}

\section{Proof of Theorem \ref{corollary2}}\label{proof_corollary2}
Rearranging terms of \eqref{add_more_stochastic1} and applying the relation \eqref{last_term} yield
\begin{align}
 \frac{1}{T}\sum^{T}_{t=1}&\mathbb{E}[\Vert\overline{\nabla f}({X}(t))\Vert^2]\leq  \frac{4}{{\alpha} T}\bigl\{ \mathbb{E}[f(\overline{X}(1))]-f(x^*)\bigr\} \nonumber\\
&+\frac{2C_2 L^2}{B_4NT}+ {L{\alpha}}\sigma^2+\frac{24B_2}{B_4}{\alpha^2}L^2\sigma^2.\label{corollary2_1}
\end{align}
Substituting \eqref{corollary2_1} into \eqref{last_lemma_stochastic} yields
\begin{align}
&\frac{1}{NT}\sum^{N}_{i=1}\sum^{T}_{t=1}\mathbb{E}[\Vert \overline{X}(t)-x_i(t)\Vert^2]\leq  \frac{156B_2}{B_4}{\alpha^2}\sigma^2\nonumber\\
 &+ {6{\alpha^2}}\sigma^2+\frac{13C_2}{B_4NT} +\frac{24{\alpha}}{ T}\bigl\{ \mathbb{E}[f(\overline{X}(1))]-f(x^*)\bigr\}.\label{corollary2_3}
\end{align}
Substituting \eqref{last_term}, \eqref{appendixB2}, and \eqref{appendixB3} into \eqref{corollary2_3} completes the proof of Theorem \ref{corollary2}.

\bibliographystyle{ieeetr}
\bibliography{reference}

\end{document}